\newfont{\gd}{eufm10 scaled \magstep1}
\newfont{\gs}{eufm7 scaled \magstep1}
\newfont{\gss}{eufm5 scaled \magstep1}
\newcommand{\N}{{\mathbb{N}}}
\newcommand{\Z}{{\mathbb{Z}}}
\newcommand{\ndr}{\mbox{\rm Endr}}
\newcommand{\sop}{S^{\text{\rm op}}}
\newcommand{\sinvt}{S^{-1}T}
\newcommand{\sinva}{S^{-1}A}
\newcommand{\ndz}{\mbox{\rm End}_{\mathbb{Z}}}
\newcommand{\sopat}{\sop*_\alpha A*_\alpha T}
\newcommand{\sopas}{\sop*_\alpha A*_\alpha S}
\newcommand{\SopaS}{\sop*_{\overline{\alpha}} (S_-AS_+)*_{\overline{\alpha}}S}
\newcommand{\Zplus}{\Z^+}
\newcommand{\aut}{\mbox{\rm Aut}}
\newcommand{\alsom}{{\widehat {\alpha }}}
\newcommand{\sopasprime}{\sop*_{\alpha'}(eAe)*_{\alpha'}S}
\newtheorem{lemma}{Lemma}[section]
\newtheorem{corollary}[lemma]{Corollary}
\newtheorem{theorem}[lemma]{Theorem}
\newtheorem{proposition}[lemma]{Proposition}
\newtheorem{remark}[lemma]{Remark}
\newtheorem{definition}[lemma]{Definition}
\newtheorem{example}[lemma]{Example}
\newtheorem{noname}[lemma]{}
\begin{document}

\title[Dilations and full corners]{Dilations and full corners on fractional skew monoid rings}%
\author{E. Pardo}
\address{Departamento de Matem\'aticas, Facultad de Ciencias\\ Universidad de C\'adiz, Campus de
Puerto Real\\ 11510 Puerto Real (C\'adiz)\\ Spain.}
\email{enrique.pardo@uca.es}\urladdr{https://sites.google.com/a/gm.uca.es/enrique-pardo-s-home-page/} \urladdr{http://www.uca.es/dpto/C101/pags-personales/enrique.pardo}

\thanks{The author supported by PAI III grants FQM-298 and P07-FQM-7156 of the Junta de Andaluc\'{\i}a, by the DGI-MICINN and European Regional Development Fund, jointly, through Project MTM2011-28992-C02-02 and by 2009 SGR 1389 grant of the Comissionat per Universitats i Recerca de la Generalitat de Catalunya.}

\subjclass[2010]{Primary 16A50, 16D70; Secondary 46L55, 46L89}

\keywords{Fractional skew monoid ring, semigroup $C^*$-crossed product}


\begin{abstract}
In this note we will show that the dilation result obtained for fractional skew monoid rings, in the case of a cancellative left Ore monoid $S$ acting on a unital ring $A$ by corner isomorphisms, holds in full generality. We apply this result to the context of semigroup $C^*$-crossed products.
\end{abstract}
\maketitle

\section*{Introduction}

In his pioneering paper \cite{C1}, Cuntz defined the algebras $\mathcal{O}_n$ and presented them as crossed products by endomorphisms. Inspired by this construction, Paschke \cite{Paschke} gave a construction of a $C^*$-algebraic crossed product $A\rtimes _{\alpha}\N$ associated to a not necessarily unital $C^*$-algebra endomorphism $\alpha$ on a $C^*$-algebra $A$. Later, R\o rdam \cite{rordamclassif} used Paschke's construction, together with the Pimsner-Voiculescu exact sequence associated to an automorphism \cite[Theorem 10.2.1]{Black}, to realize any pair of countable abelian groups $(G_0,G_1)$ as $(K_0(B),K_1(B))$ for a certain purely infinite, simple, nuclear separable $C^*$-algebra $B$. Paschke's $C^*$-algebraic construction has been generalized to other semigroups, see e.g. \cite{LacaRae, Larsenirish, Larsencanad,LarsenRae, Murphy1, Murphy2}. 

A particularly interesting tool, first remarked by Cuntz, then used by R\o rdam, and lately extended by Paschke in the case of the actions of nonnegative integers, was developed by Laca \cite{Laca} in full generality. Laca proved that, in the case of a cancellative left Ore monoid $S$ with enveloping group $G$, there is an isomorphism between the semigroup $C^*$-crossed product $A\times _{\alpha} S$ and a full corner of the group crossed product $A_{S}\times _{\widehat{\alpha}} G$ (where $A_{S}$ is a direct limit  associated to $A$ and $\alpha$), whenever the maps induced by the action are injective (with hereditary image); this is the so-called dilation of $A\times _{\alpha} S$, a construction which relies upon previous work of Murphy \cite{Murphy2}. As a consequence, many properties can be faithfully transfered from the semigroup $C^*$-crossed product to a Morita equivalent group $C^*$-crossed product. Thus, the study of the properties of the new construction will benefit of the full developed theory for the second kind of $C^*$-algebras. \vspace{.2truecm}

In \cite{AGGBP}, Ara, Gonz\'alez-Barroso, Goodearl and the author developed a purely algebraic analog of Paschke's construction with respect to monoid actions on rings: for a monoid $T$ acting on a unital ring $A$ by endomorphisms and a submonoid $S$ of $T$ satisfying the left denominator conditions, it is constructed a fractional skew monoid ring $\sopat$ which satisfies a universal property analogous that of skew group rings. In the case of $S=T=\Zplus$, the similarity of the fractional skew monoid ring with a skew-Laurent polynomial ring lets adapt the Bass-Heller-Swan-Farrell-Hsiang-Siebenmann Theorem to these rings for computing their $K_n$ groups ($n\in \Z$) \cite{AB, ABC}. As an application, this result allows to compute $K$-Theory for Leavitt path algebras \cite{AA1, AMFP}, the algebraic counterpart of graph $C^*$-algebras \cite{Raeburn}. Moreover, it is shown that an analog of Laca's dilation construction holds for fractional skew monoid rings, when the action restricts to corner isomorphisms (the algebraic analog of Laca's requirements). \vspace{.2truecm} 

The origin of the present work relies in the problem of extending the dilation construction to actions enjoying less restrictive properties. More concretely, we try to answer a concrete question posed to the author by Joachim Cuntz: 
\begin{quotation}
\textbf{Is it possible to extend the dilation construction of \cite{AGGBP} in the case of actions by \underline{unital} endomorphisms?} 
\end{quotation}
In this paper we show that the question has an affirmative answer: whenever the semigroup is cancellative, the dilation result of \cite{AGGBP} and \cite{Laca} hold with no restriction about hereditariness of the images. \vspace{.2truecm}

The contents of this paper can be summarized as follows. In Section 1, we recall the construction of a fractional skew monoid ring and some basic properties. Also, we recall the construction of the semigroup $C^*$-algebra crossed product in Laca's sense \cite{Laca}, and we analyze the fractional skew monoid ring construction from the point of view of covariant pairs. In Section 2 we show that, after changing the basis ring, it is possible to assume that the action is given by corner isomorphisms. In Section 3, we recall the dilation results of \cite[Section 3]{AGGBP}, and we prove the announced result and some consequences. In Section 4, we explain how to transfer these results to the context of $C^*$-algebras, then recovering and extending Laca's results. Finally, in Section 5,  we analyze the scope of application of the results of Section 3, by extending them to the case of not necessarily cancellative left denominator monoids. 

\section{Basic elements}

In this section we will recall the algebraic and analytic version of skew semigroup algebras, and we will look at the connections between both constructions.

\subsection{Fractional skew monoid rings}

We recall the construction of a fractional skew monoid ring, as well as the basic results we will need. The definitions and properties are borrowed from \cite[Section 1]{AGGBP}.

\begin{noname}\label{data1} {\rm
We begin by fixing the basic data needed for the construction. Let $A$ be a unital ring, and $\ndr(A)$
the monoid of not necessarily unital ring endomorphisms of $A$.

Let $T$ be a (multiplicative) monoid and $\alpha: T\rightarrow \ndr(A)$ a monoid
homomorphism, written $t\mapsto \alpha_t$. For $t\in T$, set $p_t= \alpha_t(1)$,
an idempotent in $A$. Then $\alpha_t$ can be viewed as a unital
ring homomorphism from $A$ to the corner $p_tAp_t$. For $s,t\in
T$, we have $p_{st}= \alpha_{st}(1)= \alpha_s \alpha_t(1)=
\alpha_s(p_t)$.

Let $S\subseteq T$ be a submonoid satisfying the left denominator
conditions, i.e., the left Ore condition and the monoid version of
left reversibility: whenever $t,u\in T$ with $ts=us$ for some
$s\in S$, there exists $s'\in S$ such that $s't=s'u$. Then there
exists a monoid of fractions, $S^{-1}T$, with the usual properties
(e.g., see \cite[Section 1.10]{ClPr} or \cite[Section 0.8]{Cohn}). Notice that, even in the case that $S=T$, the monoid $S$ does not need to be cancellative (e.g. any inverse monoid \cite[Example 1.5(2)]{pic}). But if $S$ is cancellative, then the left Ore condition implies left reversibility.}
\end{noname}

\begin{definition}[{\cite[Definition 1.2]{AGGBP}}]\label{newring} {\rm We denote by $\sopat$
a unital ring $R$ equipped with a unital ring
homomorphism $\phi: A\rightarrow R$ and monoid homomorphisms
$s\mapsto s_-$ from $\sop\rightarrow R$ and $t\mapsto t_+$ from
$T\rightarrow R$, universal with respect to the following
relations:

\begin{enumerate}

\item $t_+\phi(a)= \phi\alpha_t(a)t_+$ for all $a\in A$ and $t\in
T$;

\item $\phi(a)s_-= s_-\phi\alpha_s(a)$ for all $a\in A$ and $s\in
S$;

\item $s_-s_+= 1$ for all $s\in S$;

\item $s_+s_-= \phi(p_s)$ for all $s\in S$.

\end{enumerate}}
\end{definition}

The existence of such a ring follows from classical arguments. The construction above also applies when $A$ is an algebra over a field $K$ or a $\ast$-algebra, and the ring endomorphisms $\alpha_t$ for $t\in
T$ are $K$-linear or $\ast$-homomorphisms. Because of Property (3), Property (2) can be replaced by 
\begin{quotation}
(2') $s_+\phi(a)s_{-}= \phi\alpha_s(a)$ for all $a\in A$ and $s\in S$,
\end{quotation}
and Property (4) becomes redundant. Moreover, if $s_{+}$ is an isometry for all $s\in S$ (i.e. if $s_-=(s_+)^*$) then Property (3) becomes redundant too. We have the following fact:

\begin{proposition}[{\cite[Corollary 1.5 \& Proposition 1.6]{AGGBP}}]\label{sum} $\mbox{ }$
\begin{enumerate}
\item $R= \sum_{s\in S,\, t\in T} s_-\phi(A)t_+=
\sum_{s\in S,\, t\in T} s_-\phi(p_sAp_t)t_+$. 
\item The ring $R$ has an $\sinvt$-grading $R=
\bigoplus_{x\in\sinvt} R_x$ where each $R_x= \bigcup_{s^{-1}t=x}
s_-\phi(A)t_+$.
\end{enumerate}
\end{proposition}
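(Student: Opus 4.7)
My approach is to treat the spanning identity and the grading separately. For part (1), I set $X = \sum_{s \in S,\, t \in T} s_-\phi(A)t_+$ and argue $X = R$ by verifying that $X$ is a unital subring of $R$ containing all the generators $\phi(A) \cup \{s_- : s \in S\} \cup \{t_+ : t \in T\}$. These inclusions are immediate (take $a = 1$ or $s = t = 1$), as is closure under addition. The only substantive point is closure under multiplication, which hinges on rewriting a mixed product $t_+u_-$ (with $t \in T$, $u \in S$) as something of the form $s_-\phi(c)t'_+$. The left Ore condition supplies $\sigma \in S$ and $\tau \in T$ with $\sigma t = \tau u$, and then
\[
t_+u_- = \sigma_-\sigma_+t_+u_- = \sigma_-(\sigma t)_+u_- = \sigma_-(\tau u)_+u_- = \sigma_-\tau_+u_+u_- = \sigma_-\tau_+\phi(p_u) = \sigma_-\phi(\alpha_\tau(p_u))\,\tau_+,
\]
using in order relation (3), the homomorphism property of $t \mapsto t_+$, the identity $\sigma t = \tau u$, the decomposition $(\tau u)_+ = \tau_+u_+$, relation (4), and relation (1). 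Combined with relations (1) and (2), which move $\phi(a)$ past $t_+$ and past $s_-$, and with the antihomomorphism $s_-u_- = (us)_-$, a general product $s_-\phi(a)t_+ \cdot u_-\phi(b)v_+$ collapses to a single term of the form $s''_-\phi(c)t''_+$. For the second equality in (1), taking $a = 1$ in (1) and (2) gives $t_+ = \phi(p_t)t_+$ and $s_- = s_-\phi(p_s)$, whence $s_-\phi(a)t_+ = s_-\phi(p_sap_t)t_+$.

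For part (2), the inclusion $R_xR_y \subseteq R_{xy}$ comes for free from the product computation above: if $\sigma t = \tau u$ is the Ore choice used to reduce $t_+u_-$, then $s_-\phi(a)t_+ \cdot u_-\phi(b)v_+$ lands in $(\sigma s)_-\phi(A)(\tau v)_+$, and $(\sigma s)^{-1}(\tau v) = s^{-1}\sigma^{-1}\tau v = s^{-1}(tu^{-1})v = (s^{-1}t)(u^{-1}v) = xy$ in $\sinvt$. To see that each $R_x$ is actually an abelian subgroup, I invoke the common-denominator form of Ore: if $s^{-1}t = s'^{-1}t'$, there exist $p,q \in S$ with $ps = qs'$ and $pt = qt'$. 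Setting $s'' = ps = qs'$ and $t'' = pt = qt'$, the identities $s_- = s''_-p_+$ and $t_+ = p_-t''_+$ (immediate from $p_-p_+ = 1$ and $(pq)_- = q_-p_-$), together with the computation $p_+\phi(a)p_- = \phi(\alpha_p(a))$ (relation (1) plus $\alpha_p(a)p_p = \alpha_p(a)$), yield $s_-\phi(A)t_+ \subseteq s''_-\phi(A)t''_+$, and analogously for $s'_-\phi(A)t'_+$. Hence $R_x$ is a directed union of abelian subgroups, so is itself an abelian subgroup.

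The main obstacle is proving directness of the sum $R = \bigoplus_x R_x$. My strategy is to build a model $\sinvt$-graded ring $R'$, verify it satisfies the universal property defining $R$, and transfer the grading back via the resulting isomorphism. Concretely, I would take $R'_x$ to be the direct limit of the corners $p_sAp_t$ as $(s,t)$ ranges over pairs with $s^{-1}t = x$, under the refinement $(s,t) \le (ps,pt)$ with transition maps $\alpha_p$ (directedness supplied by the Ore condition), and set $R' = \bigoplus_x R'_x$. The multiplication $R'_x \cdot R'_y \to R'_{xy}$ would be defined on representatives by $[a]\cdot[b] = [\alpha_\sigma(a)\alpha_\tau(b)]$, where $\sigma t = \tau u$ is an Ore choice bridging the inner indices; the routine verifications are that this multiplication is well-defined on the direct limits (independent of Ore data and representatives) and associative, and that the natural images of $\phi(A)$, $\{s_-\}$, and $\{t_+\}$ inside $R'$ satisfy relations (1)--(4) of Definition~\ref{newring}. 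Invoking the universal property of $R$ then produces a ring homomorphism $R \to R'$; checking that it is inverse to the obvious map $R' \to R$ (which sends the class of $a \in p_sAp_t$ in $R'_x$ to $s_-\phi(a)t_+ \in R_x$) identifies the two rings, and the internal direct sum decomposition of $R'$ transfers to $R$.
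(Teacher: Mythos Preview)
The paper does not supply a proof of this proposition: it is quoted as \cite[Corollary~1.5 \& Proposition~1.6]{AGGBP} and no argument is reproduced here. So there is no proof in the present paper to compare yours against.

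That said, your reconstruction is sound and is the expected approach (and, as far as one can tell, matches what \cite{AGGBP} does). Part~(1) and the inclusion $R_xR_y\subseteq R_{xy}$ are handled cleanly via the Ore rewrite of $t_+u_-$. One small point worth tightening in your directedness argument: the assertion that $s^{-1}t=s'^{-1}t'$ yields $p,q\in S$ with $ps=qs'$ and $pt=qt'$ is not literally the definition of equality in $S^{-1}T$ (which a priori only gives $p,q\in T$ with $ps=qs'\in S$). It does follow, though: use left Ore \emph{inside} $S$ to get $p,p'\in S$ with $ps=p's'$; then $pt$ and $p't'$ have equal images in $S^{-1}T$, so left reversibility yields $u\in S$ with $u(pt)=u(p't')$, and $up,up'\in S$ do the job. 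Your plan for directness of $\bigoplus_x R_x$---build a model $S^{-1}T$-graded ring $R'$ as $\bigoplus_x \varinjlim p_sAp_t$ with the Ore-defined multiplication, check relations (1)--(4) there, and invoke the universal property of $R$---is exactly the standard route; the verifications you flag (well-definedness, associativity, inverse maps) are routine once the indexing is set up as you describe.
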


Now, assume that $S$ is \emph{left saturated} in $T$:
whenever $s\in S$ and $t\in T$ such that $ts\in S$, we must have
$t\in S$; when $S=T$, this hypothesis is clearly fulfilled. Under this additional hypothesis, we can show the following result:

\begin{proposition}[{\cite[Corollary 1.11]{AGGBP}}]\label{kernel}$\mbox{ }$
\begin{enumerate}
\item Let $s\in S$, $t\in T$, and $a\in A$. Then
$s_-\phi(a)t_+=0$ if and only if $p_sap_t\in \ker(\alpha_{s'})$
for some $s'\in S$. In particular, $\ker(\phi)= \bigcup_{s'\in S}
\ker(\alpha_{s'})$.
\item The ideal $I=\ker(\phi)$ satisfies $\alpha_s^{-1}(I)= I$ for all
$s\in S$ and
$\alpha_t(I) \subseteq I$ for all $t\in T$.
\item  $\alpha$ induces a monoid homomorphism $\alpha':
T\rightarrow \ndz(A/I)$, and $\alpha'_s$ is injective for all
$s\in S$.
\item $\sop*_\alpha A*_\alpha T= \sop*_{\alpha'}
(A/I)*_{\alpha'} T$.
\end{enumerate}
\end{proposition}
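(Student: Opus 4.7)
The plan is to dispatch the four assertions in order; the genuine work is the $\Rightarrow$ direction of (1), while (2), (3), and (4) are bookkeeping with the left Ore condition and a universal-property comparison.

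For the $\Leftarrow$ direction of (1), I would compute directly in $R$. From relation (4), $s_-\phi(p_s) = s_-s_+s_- = s_-$, and from (1) at $a=1$, $\phi(p_t)t_+ = t_+\phi(1) = t_+$, so $s_-\phi(a)t_+ = s_-\phi(p_s a p_t)t_+$. If $\alpha_{s'}(p_s a p_t)=0$, I insert $1 = s'_- s'_+$ and use (1) to move $s'_+$ past $\phi$:
\[
s_-\phi(p_s a p_t)t_+ \;=\; s_- s'_- s'_+ \phi(p_s a p_t)t_+ \;=\; s_- s'_- \phi\bigl(\alpha_{s'}(p_s a p_t)\bigr) s'_+ t_+ \;=\; 0.
\]
Taking $s=t=e$ already yields the inclusion $\bigcup_{s'\in S}\ker(\alpha_{s'}) \subseteq \ker(\phi)$.

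For the $\Rightarrow$ direction, the natural strategy is to build an explicit model of $R$ in which the vanishing is transparent. By Proposition~\ref{sum}(2), $R$ decomposes over $S^{-1}T$ into components $R_x = \bigcup_{s^{-1}t=x}s_-\phi(A)t_+$. I would present each $R_x$ as a direct limit over pairs $(s,t)$ with $s^{-1}t=x$, with transition maps induced by the endomorphisms $\alpha_{s'}$ arising when passing to common denominators via the left Ore condition; an element $s_-\phi(a)t_+$ then vanishes in this limit precisely when $p_s a p_t \in \ker(\alpha_{s'})$ for some $s'\in S$. It remains to check that the assembled graded object carries a multiplication satisfying relations (1)--(4), so that universality forces it to coincide with $R$. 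The bookkeeping needed to see that the Ore-equivalence on pairs is compatible with multiplication is the main obstacle.

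For (2), if $\alpha_{s'}(a)=0$ then applying the left Ore condition to $t\in T$ and $s'\in S$ produces $u\in S$ and $v\in T$ with $ut=vs'$, whence $\alpha_u\alpha_t(a) = \alpha_v\alpha_{s'}(a) = 0$ and $\alpha_t(a)\in I$. Taking $t=s\in S$ gives $\alpha_s(I)\subseteq I$, equivalently $I\subseteq\alpha_s^{-1}(I)$; conversely, if $\alpha_s(a)\in I$ then $\alpha_{s's}(a) = \alpha_{s'}\alpha_s(a)=0$ for some $s'\in S$, and since $s's\in S$ this forces $a\in I$. Part (3) is immediate from (2): $\alpha'_t$ is well-defined by $\alpha_t(I)\subseteq I$, the monoid law descends, and injectivity of $\alpha'_s$ just restates $\alpha_s^{-1}(I)\subseteq I$. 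Finally for (4), I would compare universal properties: since $\phi$ kills $I$ by (1), the defining data of $R$ factor through $A/I$ and continue to satisfy the (now $\alpha'$-labelled) relations, so the universal property of $\sop*_{\alpha'}(A/I)*_{\alpha'}T$ produces a map to $R$; composing the natural map $A/I\to\sop*_{\alpha'}(A/I)*_{\alpha'}T$ with the quotient $A\to A/I$ gives data in $\sop*_{\alpha'}(A/I)*_{\alpha'}T$ satisfying the $\alpha$-relations, so the universal property of $R$ yields the inverse.
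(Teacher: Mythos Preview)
The paper does not actually prove this proposition: it is quoted as background from \cite[Corollary 1.11]{AGGBP} and stated with no argument in the present text. So there is no in-paper proof to compare your attempt against.

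On its own merits your outline is correct and is in the spirit of how such results are established. The $\Leftarrow$ direction of (1) is exactly the insertion-of-$s'_-s'_+$ computation you give. For the $\Rightarrow$ direction, building an explicit $S^{-1}T$-graded model of $R$ as a direct limit over representatives $(s,t)$ of each $x\in S^{-1}T$, with transition maps coming from $\alpha_{s'}$, and then invoking the universal property to identify it with $R$, is precisely the standard route; your acknowledgement that the Ore-compatibility of the multiplication is the ``main obstacle'' is accurate. Parts (2)--(4) are the bookkeeping you describe, and your Ore argument for $\alpha_t(I)\subseteq I$ and the $\alpha_s^{-1}(I)=I$ computation are correct.

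One omission worth flagging: the paper explicitly records the left-saturation hypothesis on $S\subseteq T$ just before stating this proposition, and it is needed. In your sketch it should enter in the $\Rightarrow$ direction of (1), when you set up the directed system of pairs $(s,t)$ with $s^{-1}t=x$ fixed: passing between representatives requires knowing that if $s^{-1}t=s_1^{-1}t_1$ then one can reach a common refinement by multiplying on the left by elements of $S$, and left saturation is what guarantees the relevant left factors lie in $S$ rather than merely in $T$. You do not say where you use it, so make that explicit.
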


As Proposition \ref{kernel} shows, we can reduce the construction to the situation where $\alpha_s$ is injective for all $s\in S$. In this case, $\phi$ is injective by Proposition \ref{kernel}(1), and so we can identify $A$ with the unital subring $\phi(A)$ of $R$. 
 
\subsection{Semigroup $C^*$-crossed products}
 
We recall the definition of a semigroup $C^*$-crossed product. The definitions and properties are borrowed from \cite[Subsection 1.3]{Laca}.
 
\begin{noname}\label{data1bis}{\rm
Let $A$ be a unital $C^*$-algebra, let $\alpha$ be an action of a discrete semigroup $S$ by not necessarily unital endomorphisms of $A$, and let $H$ be a complex Hilbert space. Then, a covariant representation of the (semigroup) dynamical system $(A,S, \alpha)$ is a pair $(\pi, V)$ in which
\begin{enumerate}
\item[(i)] $\pi$ is a unital representation of $A$.
\item[(ii)] $V:S\rightarrow \text{Isom}(H)$ is an isometric representation of $S$, that is, $V_sV_t=V_{st}$ for every $s,t\in S$.
\item[(iii)] The covariance condition $\pi (\alpha_t(a))=V_t\pi (a)V_t^*$ holds for every $a\in A$ and every $t\in S$.
\end{enumerate}
}
\end{noname}
 
\begin{definition}\label{newalgebra}{\rm
Given a dynamical system $(A,S,\alpha)$ as above, the semigroup $C^*$-crossed product associated to it is a $C^*$-algebra $A\times_{\alpha} S$ together with a unital homomorphism $i_A: A\rightarrow A\times_{\alpha} S$ and a representation of $S$ as isometries $i_S: S\rightarrow A\times_{\alpha} S$ such that
\begin{enumerate}
\item $(i_A, i_S)$ is a covariant representation for $(A,S, \alpha)$.
\item For any other covariant representation $(\pi, V)$ there is a representation  $\pi\times V$ of $A\times_{\alpha} S$ such that $\pi=(\pi \times V)\circ i_A$ and $V=(\pi\times V)\circ i_S$.
\item $A\times_{\alpha} S$ is generated by $i_A(A)$ and $i_S(S)$ as a $C^*$-algebra.
\end{enumerate}
}
\end{definition}

The existence of a nontrivial universal object associated to $(A,S, \alpha)$ depends on the existence of a nontrivial covariant representation. For general endomorphisms such representation does not need to exist. But if the endomorphisms are injective, then the nontriviallity of $A\times_{\alpha} S$ will follow from its realization as a corner in a nontrivial classical $C^*$-crossed product, which turns out to be nontrivial by an argument similar to the one used in \cite[Proposition 2.2]{Stacey} (see \cite[Remark 2.5]{Laca}). The best result in this direction was proved by Laca when $S$ is a cancellative Ore semigroup acting by injective endomorphisms of $A$ \cite[Theorem 2.1 \& Theorem 2.4]{Laca}. \vspace{.2truecm}

\subsection{The algebraic construction from the analytic point of view}
 
We can understand fractional skew monoid rings in terms of algebraic covariant pairs. For, we will follow the same scheme used in Subsection 1.2.

\begin{noname}\label{eslapera}{\rm 
Let $A$ be a unital $K$-algebra, let $T$ be a monoid, let $S\subseteq T$ be a submonoid satisfying the left denominator conditions, and let $\alpha$ be an action of $T$ by not necessarily unital endomorphisms of $A$. Then, given a (infinite dimensional) $K$-vector space $H$, a covariant representation of the algebraic dynamical system $(A, T, S, \alpha)$ is a pair $(\phi, V)$ in which
\begin{enumerate}
\item $\phi:A \rightarrow \text{End}_K(H)$ is a unital homomorphism.
\item $V:T\rightarrow  \text{End}_K(H)$ is a monoid homomorphism that restricts to an isometric representation $V_{\vert S}:S\rightarrow \text{Isom}(H)\subset \text{End}_K(H)$ of $S$ , that is, $V_sV_t=V_{st}$ for every $s,t\in T$, and $V_s$ is an ``adjoinable'' endomorphism such that $V_s^* V_s=\text{Id}_H$ for every $s\in S$.
\item $\pi (\alpha_t(a))V_t=V_t\pi (a)$ holds for every $a\in A$ and every $t\in T$.
\item The covariance condition $\pi (\alpha_t(a))=V_t\pi (a)V_t^*$ holds for every $a\in A$ and every $t\in S$.
\end{enumerate}
}
\end{noname}

By Definition \ref{newring}, $\sopat$ turns out to be a universal initial object for the category of covariant representations of the algebraic dynamical system $(A,T,S, \alpha)$, in analogy with Definition \ref{newalgebra}. To be concrete, we obtain the equivalent version of Definition \ref{newring}. 

\begin{definition}\label{newcovariance}{\rm
Given an algebraic dynamical system $(A,T,S,\alpha)$ as above, the fractional skew monoid ring associated to it is a $K$-algebra $\sopat$ together with a unital homomorphism $\phi_A: A\rightarrow \sopat$ and a representation $\phi_T: T\rightarrow \sopat$ of $T$ restricting to a representation of $S$ as isometries $\phi_S: S\rightarrow \sopat$ such that:
\begin{enumerate}
\item $(\phi_A, \phi_T)$ is a covariant representation for $(A,T, S, \alpha)$.
\item For any other covariant representation $(\pi, \tau)$ there is a representation  $\pi\times \tau$ of $\sopat$ such that $\pi=(\pi \times \tau)\circ \phi_A$ and $\tau=(\pi\times \tau)\circ \phi_T$.
\item $\sopat$ is generated by $\phi_A(A)$ and $\phi_S(S)$ as a $K$-algebra.
\end{enumerate}
}
\end{definition}

We can separate two extreme cases:\vspace{.2truecm}

(A) If $S=\{1_T\}\subset T$, then, $\sopat =A\ast_{\alpha} T$ is the classical skew semigroup ring, with unique relation $t_+\phi (a)=\phi \alpha_t(a)$ for every $a\in A$ and every $t\in T$. This picture corresponds to Murphy's definition of the crossed product of $C^*$-algebras by endomorphisms given in \cite{Murphy1}. But since no extra restriction on the representation $V:T\rightarrow  \text{End}_K(H)$ is required, $A\ast_{\alpha} T$ cannot be completed to a $C^*$-algebra.\vspace{.2truecm}

(B) If $S=T$, then Property (2) of (\ref{eslapera}) corresponds to Property (ii) in (\ref{data1bis}). Also, Properties (3) and (4) of (\ref{eslapera}) become equivalent, and correspond to Property (iii) of (\ref{data1bis}). So, the definition of covariant representation in (\ref{eslapera}) recovers (\ref{data1bis}), and thus $\sop*_\alpha A*_\alpha S$ satisfies Definition \ref{newcovariance} with respect to (\ref{eslapera}).\vspace{.2truecm}

Under this point of view, Proposition \ref{kernel} says that given any algebraic dynamical system $(A,T,S, \alpha)$, we can construct a new algebraic dynamical system $(A/I, T,S, \alpha')$ such that:
\begin{enumerate}
\item For the universal covariant pair $(\phi_{A/I}, \phi_T)$, the map $\phi_{A/I}$ and the endomorphisms $\alpha'_s$ (for every $s\in S$) are injective.
\item Both algebraic dynamical systems have the same universal initial object.
\end{enumerate}
This means that, at the algebraic level, the injectivity of the endomorphisms is not a necessary requirement to have control of the nontriviality of $\sopat$.\vspace{.2truecm}

In the sequel, we will come back to this analytic picture, in order to understand how the results below extends Laca's achievements. 
 
\section{From injective morphisms to corner isomorphisms}\label{injtocorner}

In this section we will show that, in the construction of $\sopas$, we can always assume that the action of $S$ on $A$ is given by corner isomorphisms. Recall that given a ring $A$ and a nonzero idempotent $p\in A$, a corner isomorphism is a ring isomorphism $f:A\rightarrow pAp$. For example, in $\sopat$, if $s\in S$, $\alpha_s$ is injective and $\alpha_s(A)=p_sAp_s$, then $\alpha_s$ is a corner isomorphism. But even in the case of $\alpha_s$ being injective, it is not necessarily a corner isomorphism.

Let us fix the standing hypotheses, that we will assume as general as possible.

\begin{noname}\label{data5} {\rm
Let $A$ be a unital ring, let $S$ be a left Ore monoid satisfying left reversibility, and suppose that $\alpha :S\rightarrow \ndr(A)$ is an action of $S$ on $A$ by injective homomorphisms. Notice that we are assuming that it must exists at least one $s\in S$ such that $\alpha_s(A)\subsetneq p_sAp_s$.}
\end{noname}

\begin{lemma}\label{Lemma1nou}
Let $s,t\in S$. If $\widehat{s}, \widehat{t}\in S$ and $\widehat{s}t=\widehat{t}s$, then:
\begin{enumerate}
\item $t_+s_-=\widehat{s}_-\widehat{t}_+p_s$.
\item $s_+t_-=p_s\widehat{t}_-\widehat{s}_+$.
\end{enumerate}
\end{lemma}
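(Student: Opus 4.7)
The plan is to derive both identities by pure symbol manipulation, using only the defining relations of $\sopat$ together with the fact that $t\mapsto t_+$ and $s\mapsto s_-$ are monoid homomorphisms from $T$ and $\sop$, respectively. The key translation is that the hypothesis $\widehat{s}t=\widehat{t}s$ in $T$ produces two identities in $R$: applying the homomorphism $(\cdot)_+$ yields
\[
\widehat{s}_+ t_+ = (\widehat{s}t)_+ = (\widehat{t}s)_+ = \widehat{t}_+ s_+,
\]
while applying $(\cdot)_-$ (which reverses products, since it is defined on $\sop$) yields
\[
t_- \widehat{s}_- = (\widehat{s}t)_- = (\widehat{t}s)_- = s_- \widehat{t}_-.
\]
Under the standing hypotheses of \ref{data5}, Proposition \ref{kernel}(1) ensures that $\phi$ is injective, so we may identify $A$ with $\phi(A)\subseteq R$ and read relation (4) simply as $s_+s_-=p_s$.

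For part (1), I would start from $\widehat{s}_+ t_+ = \widehat{t}_+ s_+$ and premultiply by $\widehat{s}_-$, which by relation (3) absorbs the $\widehat{s}_+$ on the left to give $t_+ = \widehat{s}_-\widehat{t}_+ s_+$. Postmultiplying by $s_-$ and using the identification of relation (4) as $s_+s_- = p_s$ then produces exactly
\[
t_+ s_- = \widehat{s}_-\widehat{t}_+\, p_s.
\]

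Part (2) is handled by the mirror-image argument. Starting from $t_-\widehat{s}_- = s_-\widehat{t}_-$, I would premultiply by $s_+$ (turning $s_+s_-$ into $p_s$ via relation (4)) and postmultiply by $\widehat{s}_+$ (turning $\widehat{s}_-\widehat{s}_+$ into $1$ via relation (3)), obtaining
\[
s_+ t_- = p_s\,\widehat{t}_-\widehat{s}_+.
\]

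There is no real obstacle here; the whole content of the lemma is the observation that translating the equation $\widehat{s}t=\widehat{t}s$ through the two monoid homomorphisms produces two relations which, when sandwiched appropriately with the canonical idempotent relations (3) and (4), immediately rearrange into the asserted formulas. The only point worth flagging is that one must use the correct side in each multiplication, so that the absorbing factor $s_+s_-$ (respectively $\widehat{s}_-\widehat{s}_+$) lands adjacent to a term which the corresponding relation can collapse to $p_s$ (respectively to $1$); this determines the asymmetry between "left $\widehat{s}_-$, right $s_-$" in (1) and "left $s_+$, right $\widehat{s}_+$" in (2).
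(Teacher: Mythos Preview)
Your proof is correct and follows essentially the same route as the paper's: translate $\widehat{s}t=\widehat{t}s$ via the monoid homomorphisms into $\widehat{s}_+t_+=\widehat{t}_+s_+$ and $t_-\widehat{s}_-=s_-\widehat{t}_-$, then multiply on the appropriate sides using relations (3) and (4). The only cosmetic difference is that in part (2) the paper first postmultiplies by $\widehat{s}_+$ and then premultiplies by $s_+$, whereas you perform both at once, but this changes nothing.
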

\begin{proof} $\mbox{ }$\vspace{.2truecm}

(1) If $\widehat{s}t=\widehat{t}s$, then $\widehat{s}_+t_+=\widehat{t}_+s_+$. Thus, $t_+=\widehat{s}_-\widehat{s}_+t_+=\widehat{s}_-\widehat{t}_+s_+$, and hence
$$t_+s_-=\widehat{s}_-\widehat{t}_+s_+s_-=\widehat{s}_-\widehat{t}_+p_s.$$

(2) If $\widehat{s}t=\widehat{t}s$, then $t_-\widehat{s}_-=s_-\widehat{t}_-$. Thus, $t_-=t_-\widehat{s}_-\widehat{s}_+=s_-\widehat{t}_-\widehat{s}_+$, and hence
$$s_+t_-=s_+s_-\widehat{t}_-\widehat{s}_+=p_s\widehat{t}_-\widehat{s}_+.$$
\end{proof}

Now, we will define the key object of this section.

\begin{definition}\label{newring2}
{\rm Under the above hypotheses, we define the set
$$S_-AS_+:=\{ s_-as_+ \mid s\in S, a\in A\}\subset \sopas .$$
}
\end{definition}

We proceed to fix the basic properties of this set. Notice that, for any $a\in A$ and any $s\in S$, we have $s_-as_+=s_-(p_sap_s)s_+$.

\begin{lemma}\label{Lemma3nou}
The set $S_-AS_+$ is a unital subring of $\sopas$, containing $A$ as unital subring.
\end{lemma}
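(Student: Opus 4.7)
My plan is to verify three things in order: (a) that $A$ sits inside $S_-AS_+$ as a unital subring, (b) that $S_-AS_+$ is closed under sums and additive inverses, and (c) that $S_-AS_+$ is closed under products. Part (a) is immediate: since $(\cdot)_-:\sop\to R$ and $(\cdot)_+:S\to R$ are monoid homomorphisms, $1_-=1_+=1_R$, so $a=1_-a\,1_+\in S_-AS_+$ for every $a\in A$, and in particular $1_R$ is a multiplicative identity for $S_-AS_+$ agreeing with that of $A$.

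For (b), given $s_-as_+$ and $t_-bt_+$ in $S_-AS_+$, I would invoke the left Ore condition on $s,t\in S$ to pick $\widehat{s},\widehat{t}\in S$ with $\widehat{s}t=\widehat{t}s=:u$. Inserting $1=\widehat{t}_-\widehat{t}_+$, applying relation (1) of Definition~\ref{newring} to push $\widehat{t}_+$ past $\phi(a)$, and using the (anti-)homomorphism identities $s_-\widehat{t}_-=u_-$ and $\widehat{t}_+s_+=u_+$ gives
\[
s_-as_+=s_-\widehat{t}_-\widehat{t}_+\phi(a)s_+=u_-\alpha_{\widehat{t}}(a)u_+,
\]
and symmetrically (inserting $1=\widehat{s}_-\widehat{s}_+$) $t_-bt_+=u_-\alpha_{\widehat{s}}(b)u_+$. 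Therefore
\[
s_-as_++t_-bt_+=u_-\bigl(\alpha_{\widehat{t}}(a)+\alpha_{\widehat{s}}(b)\bigr)u_+\in S_-AS_+,
\]
and since $-s_-as_+=s_-(-a)s_+$, additive inverses are also in $S_-AS_+$. Hence $S_-AS_+$ is an additive subgroup of $\sopas$.

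For (c), I compute $(s_-as_+)(t_-bt_+)=s_-\phi(a)(s_+t_-)\phi(b)t_+$ and use Lemma~\ref{Lemma1nou}(2) with the same $\widehat{s},\widehat{t},u$ to replace $s_+t_-=p_s\widehat{t}_-\widehat{s}_+$. Then relation (2) moves $\widehat{t}_-$ leftward through $\phi(ap_s)$ and relation (1) moves $\widehat{s}_+$ rightward through $\phi(b)$, producing
\[
(s_-as_+)(t_-bt_+)=u_-\bigl(\alpha_{\widehat{t}}(ap_s)\,\alpha_{\widehat{s}}(b)\bigr)u_+.
\]
The bracketed factor lies in $A$, being a product of two elements of $A$, so the result is in $S_-AS_+$. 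Combined with (a) and (b), $S_-AS_+$ is a unital subring of $\sopas$ containing $A$ as a unital subring.

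The main obstacle is purely organizational: each step relies on moving the generators $s_\pm$ past one another and past $\phi(A)$ using relations (1)--(3) of Definition~\ref{newring} and Lemma~\ref{Lemma1nou}, so one must carefully track on which side of $\phi(A)$ each $s_\pm$ sits, and verify in both (b) and (c) that the residual ``middle'' term always collapses into $\phi(A)$ rather than into a strictly larger corner.
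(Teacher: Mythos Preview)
Your proof is correct and follows essentially the same approach as the paper's: both use the left Ore condition together with Lemma~\ref{Lemma1nou} and the defining relations of Definition~\ref{newring} to bring sums and products to a common index $u=\widehat{s}t=\widehat{t}s$. The only cosmetic differences are that you invoke Lemma~\ref{Lemma1nou}(2) rather than (1) for the product (reflecting the order of your factors), you insert $\widehat{t}_-\widehat{t}_+$ on one side rather than both for the sum, and you embed $A$ via $a=1_-a\,1_+$ instead of the paper's $a=s_-\alpha_s(a)s_+$.
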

\begin{proof}
Clearly, $1=s_- \cdot 1\cdot s_+\in S_-AS_+$. 

Now, let $s,t\in S$, $a,b\in A$. If $\widehat{s}t=\widehat{t}s$, then:
\begin{enumerate}
\item $t_-at_+\cdot s_-bs_+=t_-p_t ap_t t_+\cdot  s_-p_s b p_s s_+=t_- p_t ap_t \widehat{s}_- \widehat{t}_+ p_s b p_s s_+$
(the last equality is due to Lemma \ref{Lemma1nou}), and by Definition \ref{newring} it equals $(\widehat{s}t)_-\left(\alpha_{\widehat{s}}(p_t ap_t)\alpha_{\widehat{t}}(p_s b p_s)\right)(\widehat{s}t)_+$.
\item $(t_-at_+) + (s_-bs_+)=(t_-p_t ap_t t_+) +  (s_-p_s b p_s s_+)=$ $(t_-\widehat{s}_-\widehat{s}_+p_t ap_t \widehat{s}_-\widehat{s}_+t_+) +$\newline  $(s_-\widehat{t}_-\widehat{t}_+p_s b p_s \widehat{t}_-\widehat{t}_+s_+)$, and by Definition \ref{newring} it equals $(\widehat{s}t)_-\left(\alpha_{\widehat{s}}(p_t ap_t) +\alpha_{\widehat{t}}(p_s b p_s)\right)(\widehat{s}t)_+$.
\end{enumerate}
Hence, $S_-AS_+$ is a unital subring of $\sopas$. 

Finally, for any $a\in A$ and any $s\in S$ we have that $a=s_-s_+as_-s_+=s_-\alpha_s(a)s_+\in S_-AS_+$, so we are done.
\end{proof}

\begin{remark}\label{Remark4nou}
{\rm If $S$ acts on $A$ by corner isomorphisms, since $\alpha_s(A)=p_sAp_s$, then there exists a (unique) $b\in A$ such that $p_sap_s=\alpha_s(b)$. Hence, $s_-as_+=s_-(p_sap_s)s_+=s_-\alpha_s(b)s_+=s_-s_+bs_-s_+=b$. Thus, $S_-AS_+=A$.
}
\end{remark}

We have a picture of $S_-AS_+$ which simplifies the effective computation of this ring. Concretely

\begin{lemma}\label{Lemma:DirectLimit1}
The ring $S_-AS_+$ is isomorphic to a direct limit of rings.
\end{lemma}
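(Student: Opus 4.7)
The idea is to realise $S_-AS_+$ as the direct limit of the corners $A_s := p_sAp_s$, indexed by $S$ endowed with the preorder $s \leq u \iff u \in Ss$. This preorder is reflexive and transitive, and the left Ore condition makes it directed: for $s,t \in S$, any $v,w \in S$ with $vs=wt$ give $u := vs \geq s,t$.

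For $s \leq u$ with $u = vs$, I define the transition map $f_{s,u} \colon A_s \to A_u$ by $a \mapsto \alpha_v(a)$; it lands in $A_u$ because $\alpha_v(p_s) = p_{vs} = p_u$, and it is clearly a unital ring homomorphism. The delicate point is independence of the choice of $v$: if $v_1 s = v_2 s$, left reversibility provides $s' \in S$ with $s' v_1 = s' v_2$, and the hypothesis of \ref{data5} that every $\alpha_{s'}$ is injective forces $\alpha_{v_1}(a) = \alpha_{v_2}(a)$. Transitivity $f_{u,w} \circ f_{s,u} = f_{s,w}$ is immediate since $\alpha$ is a monoid homomorphism, so $(A_s, f_{s,u})_{s \in S}$ is a directed system of unital rings.

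Next, each $\psi_s \colon A_s \to S_-AS_+$, $a \mapsto s_- a s_+$, is a unital ring homomorphism by the relations of Definition \ref{newring}. Using Property (2) in the form $v_- \alpha_v(a) = a v_-$ together with Property (3), $v_- v_+ = 1$, one computes
$$\psi_u(f_{s,u}(a)) = (s_- v_-)\, \alpha_v(a)\, (v_+ s_+) = s_- a v_- v_+ s_+ = s_- a s_+ = \psi_s(a),$$
so the family $\{\psi_s\}$ is compatible and induces a canonical unital ring homomorphism $\psi \colon \varinjlim_{s \in S} A_s \to S_-AS_+$.

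It remains to show $\psi$ is bijective. Surjectivity is transparent since $s_- a s_+ = \psi_s(p_s a p_s)$. For injectivity, suppose $a \in A_s$ and $b \in A_t$ satisfy $s_- a s_+ = t_- b t_+$. Pick $u = vs = wt \in Ss \cap St$ and multiply both sides on the left by $u_+ = v_+ s_+$ and on the right by $u_- = s_- v_-$; using $s_+ s_- = \phi(p_s)$, $v_+ a = \alpha_v(a) v_+$, $v_+ v_- = \phi(p_v)$, and $a = p_s a p_s$, the left-hand side collapses to $v_+ a v_- = \alpha_v(a) = f_{s,u}(a)$, and symmetrically the right-hand side to $f_{t,u}(b)$. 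Hence $f_{s,u}(a) = f_{t,u}(b)$ holds inside $\sopas$; since the injectivity of all $\alpha_{s'}$ forces $\phi \colon A \hookrightarrow \sopas$ to be injective by Proposition \ref{kernel}(1), this is already an equality in $A_u$, so $a$ and $b$ represent the same class in the limit. The main obstacle throughout is the well-definedness of the $f_{s,u}$ in the absence of cancellativity, which is exactly what couples left reversibility with the injectivity of every $\alpha_s$ in the standing hypothesis \ref{data5}.
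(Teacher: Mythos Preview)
Your proof is correct and follows essentially the same route as the paper's: both identify $S_-AS_+$ with $\varinjlim_{s\in S}\bigl(p_sAp_s,\ \alpha_{v}\!\mid_{p_sAp_s}\bigr)$ over the directed preorder $s\le u\Leftrightarrow u\in Ss$. The only cosmetic difference is the direction of the argument: the paper first observes that $S_-AS_+$ is the directed union of the internal subrings $t_-At_+$ (this is immediate from the computations in Lemma~\ref{Lemma3nou}) and then notes that $t_-at_+\mapsto p_tap_t$ is an isomorphism $t_-At_+\cong p_tAp_t$, whereas you build the external limit of the corners and exhibit the map $\psi$ explicitly, using Proposition~\ref{kernel}(1) for injectivity; the content is the same.
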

\begin{proof}
For each $t\in S$, the set $t_-At_+$ is as unital subring of $\sopas$ containing $A$ as unital subring. Now, let $a,b\in A$ and $s,t\in S$. If $\widehat{s}t=\widehat{t}s$, then $t_-at_+=t_-\widehat{s}_-\widehat{s}_+p_t ap_t \widehat{s}_-\widehat{s}_+t_+= (\widehat{s}t)_-[\alpha_{\widehat{s}}(p_t ap_t)](\widehat{s}t)_+$, and similarly $s_-bs_+=(\widehat{s}t)_-[\alpha_{\widehat{t}}(p_s b p_s)](\widehat{s}t)_+$. Thus, $\left( s_-As_+\right)_{\{s\in S\}}$ is a direct system of rings, and clearly $S_-AS_+$ coincides with the direct union of this system. Moreover, for any $t\in S$ the rule $t_-at_+\mapsto p_tap_t$ defines a unital ring isomorphism from $t_-At_+$ to the corner ring $p_tAp_t$. Notice that, under this identification, the above inclusion map $t_-At_+\hookrightarrow (\widehat{s}t)_-A(\widehat{s}t)_+$ becomes $\alpha_{{\widehat{s}}\vert_{p_tAp_t}}:p_tAp_t\rightarrow p_{\widehat{s}t}Ap_{\widehat{s}t}$, so that
$$S_-AS_+\cong \varinjlim \left( p_tAp_t, \alpha_{{\widehat{s}}\vert_{p_tAp_t}}\right)$$
as unital rings.
\end{proof}

Next step is to show how the action $\alpha$ extends from $A$ to $S_-AS_+$.

\begin{lemma}\label{Lemma5nou}
The action $\alpha$ of $S$ on $A$ extends to an action $\overline{\alpha}:S\rightarrow \ndr(S_-AS_+)$ by corner isomorphisms.
\end{lemma}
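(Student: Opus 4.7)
My plan is to define $\overline{\alpha}_s\colon S_-AS_+\to S_-AS_+$ by conjugation with the isometry $s_+$, namely $\overline{\alpha}_s(x):=s_+xs_-$. This is the only natural candidate: on $A$ it reproduces $\alpha_s$ by property (2'), so it extends $\alpha$; and since $s_+s_-=p_s$, the image automatically lies in the corner $p_s(\sopas)p_s$, which is precisely what a corner isomorphism requires.

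The main obstacle I expect is showing well-definedness, i.e.\ that $\overline{\alpha}_s$ lands in $S_-AS_+$ and not merely in $p_s\sopas p_s$. It is enough to handle a generator $x=t_-at_+$. I would invoke the left Ore condition to pick $\widehat{s},\widehat{t}\in S$ with $\widehat{s}t=\widehat{t}s$, and then apply Lemma~\ref{Lemma1nou} to replace $s_+t_-$ by $p_s\widehat{t}_-\widehat{s}_+$ and $t_+s_-$ by $\widehat{s}_-\widehat{t}_+p_s$. Property (2') collapses the inner block to $\widehat{s}_+a\widehat{s}_-=\alpha_{\widehat{s}}(a)\in A$, and pushing the leftover $p_s$ through $\widehat{t}_\pm$ via properties (1)–(2) (using $\alpha_{\widehat{t}}(p_s)=p_{\widehat{t}s}$) rewrites the whole expression as $\widehat{t}_-\,c\,\widehat{t}_+$ with $c=p_{\widehat{t}s}\alpha_{\widehat{s}}(a)p_{\widehat{t}s}\in A$, manifestly in $S_-AS_+$. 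The fact that this did not depend on the choice of $\widehat{s},\widehat{t}$ is automatic, because $\overline{\alpha}_s(x)$ was defined intrinsically as $s_+xs_-$.

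The remaining verifications are routine algebra with the defining relations of $\sopas$. Additivity is clear, and multiplicativity collapses under $s_-s_+=1$: $\overline{\alpha}_s(x)\overline{\alpha}_s(y)=s_+x(s_-s_+)ys_-=s_+xys_-=\overline{\alpha}_s(xy)$. Monoidality $\overline{\alpha}_{st}=\overline{\alpha}_s\overline{\alpha}_t$ is immediate from the factorisations $(st)_+=s_+t_+$ and $(st)_-=t_-s_-$, and $\overline{\alpha}_1=\mathrm{id}$ is trivial.

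To finish, I have to check each $\overline{\alpha}_s$ is a corner isomorphism onto $p_s(S_-AS_+)p_s$. Since $\overline{\alpha}_s(1)=s_+s_-=p_s$, this is the correct target unit. Injectivity follows by multiplying $s_+xs_-=0$ by $s_-$ on the left and $s_+$ on the right and using $s_-s_+=1$. For surjectivity, any $y\in p_s(S_-AS_+)p_s$ can be written $y=p_szp_s$ with $z=t_-at_+\in S_-AS_+$, and then $s_-ys_+=s_-(s_+s_-)t_-at_+(s_+s_-)s_+=(ts)_-a(ts)_+\in S_-AS_+$, with $\overline{\alpha}_s(s_-ys_+)=y$. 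This completes the proof.
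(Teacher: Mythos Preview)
Your proof is correct and follows essentially the same route as the paper: define $\overline{\alpha}_s(x)=s_+xs_-$, use Lemma~\ref{Lemma1nou} and the Ore condition to rewrite $s_+t_-at_+s_-$ as $\widehat{t}_-\,\alpha_{\widehat{s}}(p_tap_t)\,\widehat{t}_+\in S_-AS_+$, and then verify the corner-isomorphism properties. Your injectivity argument (multiply $s_+xs_-=0$ on the left by $s_-$ and on the right by $s_+$ and use $s_-s_+=1$) is in fact cleaner than the paper's, which instead traces injectivity back to that of $\alpha_{\widehat{s}}$; and your observation that independence of the choice of $\widehat{s},\widehat{t}$ is automatic because $s_+xs_-$ is intrinsic is a nice streamlining of the paper's explicit discussion of this point.
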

\begin{proof}
For each $s\in S$, and for each $t_-at_+\in S_-AS_+$, we define $\overline{\alpha}_s(t_-at_+):=s_+t_-at_+s_-$. Let us see that it is well-defined. For, if $\widehat{s}t=\widehat{t}s$, then by Lemma \ref{Lemma1nou} $s_+t_-=\widehat{t}_-\widehat{s}_+p_t$ and $t_+s_-=p_t\widehat{s}_-\widehat{t}_+$. Thus, $s_+t_-at_+s_-=(s_+t_-)p_tap_t(t_+s_-)=\widehat{t}_-(\widehat{s}_+p_tap_t\widehat{s}_-)\widehat{t}_+=\widehat{t}_-\alpha_{\widehat{s}}(p_tap_t)\widehat{t}_+\in S_-AS_+$. Notice that $t_-at_+=0$ if and only if $p_tap_t=0$, so that there is no ambiguity related to the choice of $\widehat{s}$ and $\widehat{t}$. Moreover, if $t_-at_+\ne 0$, since $p_tap_t\ne 0$ and $\alpha_{\widehat{s}}$ is injective, we have 
$$0\ne \alpha_{\widehat{s}}(p_tap_t)=\widehat{s}_+p_tap_t\widehat{s}_-=\widehat{s}_+t_+t_-at_+t_-\widehat{s}_-=$$
and since $\widehat{s}t=\widehat{t}s$, it equals
$$=\widehat{t}_+(s_+t_-at_+s_-)\widehat{t}_-=\widehat{t}_+(\overline{\alpha}_s(t_-at_+))\widehat{t}_-.$$
Thus, $\overline{\alpha}_s(t_-at_+)\ne 0$, whence $\overline{\alpha}_s$ is injective.

Clearly, since $t_-t_+=1$ for every $t\in S$, $\overline{\alpha}_t$ is a homomorphism for any $t\in S$. A simple computation shows that for any $s,t\in S$ we have $\overline{\alpha}_{st}=\overline{\alpha}_s\cdot \overline{\alpha}_t$, whence $\overline{\alpha}$ defines an action of $S$ on $S_-AS_+$ by injective homomorphisms.

Finally, on one side, $\overline{\alpha}_t(s_-as_+)=t_+s_-as_+t_-=p_t(t_+s_-as_+t_-)p_t\in p_t(S_-AS_+)p_t$; on the other side, $p_ts_-as_+p_t=t_+(t_-s_-as_+t_+)t_-=t_+((st)_-a(st)_+)t_-=\overline{\alpha}_t((st)_-a(st)_+)\in \overline{\alpha}_t(S_-AS_+)$, as desired.
\end{proof}

Next result fixes the relation between $\sopas$ and $\SopaS$.

\begin{lemma}\label{aquestcalia}
$\sopas = \SopaS$.
\end{lemma}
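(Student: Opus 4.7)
The plan is to use the universal property of Definition~\ref{newring} on both sides to construct mutually inverse ring homomorphisms $\Psi\colon \SopaS \to \sopas$ and $\Phi\colon \sopas \to \SopaS$; the asserted equality is then the canonical identification these isomorphisms provide. Note that $\SopaS$ is well defined by Lemma~\ref{Lemma5nou}.

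For $\Psi$, I take as universal data inside $\sopas$ the inclusion $S_-AS_+ \hookrightarrow \sopas$ of Lemma~\ref{Lemma3nou} together with the already existing $s_\pm \in \sopas$ for $s \in S$. Relations (3) and (4) of Definition~\ref{newring} for $\overline{\alpha}$ hold verbatim, since $\overline{\alpha}_s(1) = s_+ s_- = p_s$. For relations (1) and (2), inserting the formula $\overline{\alpha}_s(x) = s_+ x s_-$ from Lemma~\ref{Lemma5nou} reduces both $s_+ x = \overline{\alpha}_s(x) s_+$ and $x s_- = s_- \overline{\alpha}_s(x)$ to applications of $s_- s_+ = 1$. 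Surjectivity of $\Psi$ then follows from Proposition~\ref{sum}(1) together with the inclusion $A \subseteq S_-AS_+$. For $\Phi$, I use the composite inclusion $A \hookrightarrow S_-AS_+ \hookrightarrow \SopaS$ together with the $s_\pm \in \SopaS$; here the crucial observation is that $\overline{\alpha}$ restricts to $\alpha$ on $A$, because in $\sopas$ one has $s_+ a s_- = \alpha_s(a) s_+ s_- = \alpha_s(a) p_s = \alpha_s(a)$, and Lemma~\ref{Lemma5nou} identifies this with $\overline{\alpha}_s(a)$. Thus the four relations of Definition~\ref{newring} for $(A, S, \alpha)$ inside $\SopaS$ follow as specializations of the defining relations of $\SopaS$ for $(S_-AS_+, S, \overline{\alpha})$.

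To verify that $\Phi \circ \Psi = \id_{\SopaS}$ and $\Psi \circ \Phi = \id_{\sopas}$, I check on generators. The identities on $s_\pm$ and on $A$ are immediate, which suffices for $\Psi \circ \Phi = \id_{\sopas}$ by Proposition~\ref{sum}(1). The main obstacle is $\Phi \circ \Psi = \id_{\SopaS}$: tracking an element $t_- a t_+ \in S_-AS_+ \subset \SopaS$ through the composition returns the product $t_- \cdot a \cdot t_+$ computed inside $\SopaS$ (with $a$ identified via the composite embedding), and one must show this product coincides with the original image of $t_- a t_+$ under $S_-AS_+ \hookrightarrow \SopaS$. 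Replacing $a$ by $p_t a p_t$ (an equality in $S_-AS_+$) and applying the covariance relation $t_+ y t_- = \overline{\alpha}_t(y)$ of $\SopaS$ to $y = t_- p_t a p_t t_+$, together with the computation $\overline{\alpha}_t(y) = p_t a p_t$ from Lemma~\ref{Lemma5nou}, gives $t_+ (t_- p_t a p_t t_+) t_- = p_t a p_t$; sandwiching by $t_-$ on the left and $t_+$ on the right and using $t_- t_+ = 1$ delivers the required identity.
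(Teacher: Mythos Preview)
Your proof is correct and follows the same approach as the paper: construct the map $\Psi\colon \SopaS \to \sopas$ by verifying relations (1)--(4) of Definition~\ref{newring} for the inclusion $S_-AS_+\hookrightarrow \sopas$ and the given $s_\pm$, then use $A\subset S_-AS_+$ and $\overline{\alpha}|_A=\alpha$ to go back. The paper's proof stops after constructing $\Psi$ and dispatches the rest in one sentence (``Since $A\subset S_-AS_+\subset \sopas$ as unital rings and $\overline{\alpha}|_A=\alpha$, the result is clear''), whereas you explicitly build the inverse $\Phi$ via the universal property of $\sopas$ and verify both compositions on generators; in particular, your computation showing that $\psi(t_-at_+)=t_-\,\psi(j(p_tap_t))\,t_+$ in $\SopaS$ is exactly the nontrivial point the paper leaves to the reader. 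One very minor remark: when you ``replace $a$ by $p_tap_t$'' you are implicitly also using that $t_-\,\psi(j(a))\,t_+ = t_-\,\psi(j(p_tap_t))\,t_+$ in $\SopaS$, which follows from $t_+t_-=\psi(p_t)$ and $t_-t_+=1$; it would do no harm to say so.
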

\begin{proof}
Fix the inclusion map $\iota:S_-AS_+\hookrightarrow \sopas$ and the action $\overline{\alpha}$. Then, in $\sopas$ we have for any $t\in S$ and for any $s_-as_+\in S_-AS_+$:
\begin{enumerate}
\item $t_+(s_-as_+)=\overline{\alpha}_t(s_-as_+)t_+$.
\item $(s_-as_+)t_-=t_-\overline{\alpha}_t(s_-as_+)$.
\item $t_-t_+=1$.
\item $t_+t_-=\alpha_t(1)=\overline{\alpha}_t(1)=p_t$.
\end{enumerate}
Thus, by Definition \ref{newring} there exists a unique natural homomorphism 
$$\SopaS \rightarrow \sopas$$ 
induced by $\iota$ and $\overline{\alpha}$. Since $A\subset S_-AS_+\subset \sopas$ as unital rings and $\overline{\alpha}\vert_A=\alpha$, the result is clear.
\end{proof}

\begin{remark}\label{Remark_Laca1}{\rm
In terms of convariant representations Lemma \ref{aquestcalia} says that, given any algebraic dynamical system $(A,S,\alpha)$ with $S$ a  left Ore, left reversible monoid acting on $A$ by injective homomorphisms with not necessarily hereditary range, we can construct a new algebraic dynamical system $(S_-AS_+, S, \overline{\alpha})$ in which $S$ acts on $S_-AS_+$ by injective homomorphisms with hereditary range, and such that both dynamical systems have the same universal initial object.
}
\end{remark}

\begin{remark}\label{unitalmaps}
{\rm If $\alpha$ is an action by unital homomorphisms, then Lemma \ref{Lemma5nou} shows that $\overline{\alpha}_s$ is an automorphism of $S_-AS_+$ for every $s\in S$. Hence, $\overline{\alpha}:S\rightarrow \text{Aut}(S_-AS_+)$.
}
\end{remark}

\section{Dilations and full corners revisited}

Paschke \cite{Paschke}, generalizing previous results of Cuntz and R\o rdam, showed that a C*-algebra crossed product by an endomorphism corresponds naturally to a corner in a crossed product by an automorphism. In other words, $A\rtimes_{\alpha}\N$ is isomorphic
to a full corner $e(B\times_{\alpha'} \Z)e$ of a suitable group $C^*$-crossed product $B\times_{\alpha'} \Z$. Subsequently, Laca \cite{Laca} extended the scope of this result to semigroups $C^*$-crossed products on cancellative left Ore monoids. From an algebraic point of view, the analog result is \cite[Proposition 3.8]{AGGBP}. This result shows that, whenever the homomorphisms $\alpha_s$ are corner isomorphisms for all $s\in S$, then $\sop*_\alpha A*_\alpha S$ is a full corner ring $e(B*_{\alpha'}G)e$, where $B*_{\alpha'}G$ is a suitable skew group ring over the group $G=S^{-1}S$. In this section we will show that this result holds without the assumption that $S$ acts on $A$ by corner isomorphisms.\vspace{.2truecm} 

First, we will briefly recall the facts in \cite[Section 3]{AGGBP}.

\begin{noname}[{\cite[\textbf{3.1}]{AGGBP}}]\label{data3} {\rm
Let $A$ be a unital ring, $G$ a group, and $\alpha
:G\rightarrow \aut(A)$ an action by injective homomorphisms. Assume that $S$ is
a submonoid of $G$ with $G=S^{-1}S$ (thus, $S$ is cancellative and satisfies the left Ore
condition), and let $R=A\ast _{\alpha
}G$ be the corresponding skew group ring. Suppose that there exists a nontrivial idempotent $e\in A$ such that $\alpha _s(e)\leq e$ for all $s\in S$. Here, $\leq$ denotes the classical order for idempotents of a ring $A$: given $e,f\in A$ idempotents, $e\leq f$ if $e=ef=fe$. }
\end{noname}

\begin{remark}\label{norole}
{\rm
It is important to notice that, in the arguments given in \cite[Section 3]{AGGBP} for the results that follow, the fact that $\alpha$ acts by corner isomorphisms does not play any role, and thus these results hold under the hypothesis of $S$ acting on $A$ by injective homomorphisms.
}
\end{remark}

\begin{proposition}[{\cite[Lemma 3.2 \& Proposition 3.3]{AGGBP}}]\label{fullcorner1}
\label{isomap} Under the assumptions in (\ref{data3}), the following hold:
\begin{enumerate}
\item The action $\alpha$ restricts to an action $\alpha
':S\rightarrow \ndr(eAe)$ by injective
homomorphisms.
\item There are natural monoid homomorphisms $\sop \rightarrow
eRe$, given by $s\mapsto es^{-1}$, and
$S\rightarrow eRe$, given by $t\mapsto te$, satisfying the
conditions {\rm (1)--(4)} in Definition {\rm \ref{newring}} with
respect to
$\alpha '$ and the inclusion map $\phi :eAe\rightarrow
eRe$.
\item The rings
$\sopasprime$ and
$e(A\ast _{\alpha }G)e$ are isomorphic as $G$-graded rings.
\end{enumerate}
\end{proposition}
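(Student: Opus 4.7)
The plan is to verify each of the three assertions by direct calculation inside the skew group ring $R = A *_\alpha G$, and then to exploit the natural $G$-grading of both sides to upgrade a universal surjection to an isomorphism.

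For (1), the hypothesis $\alpha_s(e) \le e$ gives $\alpha_s(eAe) = \alpha_s(e)\alpha_s(A)\alpha_s(e) \subseteq eAe$, so $\alpha'_s := \alpha_s|_{eAe}$ is a ring endomorphism of $eAe$; it is not unital at $e$ (it sends $e$ to $\alpha_s(e) =: p'_s$), but it is injective and the assignment $s \mapsto \alpha'_s$ is a monoid homomorphism, both properties being inherited from $\alpha$. For (2), set $s_- := e s^{-1}$ and $t_+ := t e$ inside $eRe$, and let $\phi: eAe \hookrightarrow eRe$ be the inclusion. The four defining relations of Definition \ref{newring} should be checked using the skew commutation $g \cdot a = \alpha_g(a) \cdot g$ in $R$ together with $ae = ea = a$ for $a \in eAe$. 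For instance, relation (1) reads $(te)\,a = \alpha_t(a)\,(te)$: the left side equals $\alpha_t(ea)\,t = \alpha_t(a)\,t$ by skew commutation, while the right side equals $\alpha_t(a)\,\alpha_t(e)\,t = \alpha_t(a)\,t$ since $\alpha_t(a) \in \alpha_t(e)A\alpha_t(e)$. The remaining relations are similar one-line manipulations, with (3) reducing to $s^{-1}s = 1_G$ and (4) to $s \cdot e = \alpha_s(e) \cdot s$.

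For (3), the universal property of the fractional skew monoid ring combined with (2) supplies a ring homomorphism $\psi: \sop *_{\alpha'}(eAe)*_{\alpha'}S \to eRe$ acting on generators by $s_- b\, t_+ \mapsto e\,\alpha_{s^{-1}}(b)\,\alpha_g(e)\cdot g$ where $g := s^{-1}t \in G$. Both rings carry natural $G$-gradings --- the domain by Proposition \ref{sum}(2), the codomain by restriction from the canonical grading on $R$ --- and $\psi$ is grading-preserving by construction. Surjectivity on each homogeneous component is straightforward: any element of $(eRe)_g$ is a sum of monomials $e\,a\,\alpha_g(e)\cdot g$, and the choice $b := \alpha_s(e)\,\alpha_s(a)\,\alpha_t(e) \in eAe$ yields $\psi(s_- b\, t_+) = e\,a\,\alpha_g(e)\cdot g$ after unwinding $\alpha_{s^{-1}}\alpha_s = \id_A$.

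The principal obstacle is injectivity of $\psi$, for which I would invoke Proposition \ref{kernel}(1) (the left-saturation hypothesis is vacuous when $S = T$): it states that $s_- b\, t_+ = 0$ iff $\alpha_s(e)\,b\,\alpha_t(e) \in \ker \alpha'_{s'}$ for some $s' \in S$, which forces $\alpha_s(e)\,b\,\alpha_t(e) = 0$ since each $\alpha'_{s'}$ is the restriction of the automorphism $\alpha_{s'}$. On the other hand, $\psi(s_- b\, t_+) = 0$ says $e\,\alpha_{s^{-1}}(b)\,\alpha_g(e) = 0$, and applying $\alpha_s$ turns this into $\alpha_s(e)\,b\,\alpha_t(e) = 0$. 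The two vanishing conditions therefore coincide, so $\ker\psi$ vanishes on every graded component and hence everywhere. As Remark \ref{norole} emphasizes, nowhere in this verification does one need $\alpha_s(A) = p_s A p_s$, so the hereditariness hypothesis present in the original statement of \cite{AGGBP} is genuinely superfluous.
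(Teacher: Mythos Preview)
The paper itself does not prove this proposition; it is quoted from \cite[Lemma 3.2 \& Proposition 3.3]{AGGBP}, with Remark \ref{norole} merely recording that the corner-isomorphism hypothesis of the original reference plays no role. Your argument is correct and follows precisely the expected line of the cited reference: verify the four relations of Definition \ref{newring} by direct computation in $R=A*_\alpha G$, obtain $\psi$ from the universal property, note that it is $G$-graded, and then reduce injectivity to a single-monomial kernel computation via Proposition \ref{kernel}(1) (where $\ker\alpha'_{s'}=0$ because each $\alpha'_{s'}$ is the restriction of an automorphism). One point worth making explicit in your write-up is that Proposition \ref{sum}(2) presents each homogeneous component as a \emph{union} $\bigcup_{s^{-1}t=x} s_-\phi(eAe)t_+$, not merely a sum; this is exactly why your single-monomial check ``$\psi(s_-bt_+)=0 \Leftrightarrow \alpha_s(e)\,b\,\alpha_t(e)=0 \Leftrightarrow s_-bt_+=0$'' already establishes injectivity on the whole graded piece.
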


Now, we recall what happens in the reverse direction, looking for the representation
of a fractional skew monoid ring $\sopas$ as a corner ring of a suitable skew group ring.

\begin{noname}[{\cite[\textbf{3.5}]{AGGBP}}]\label{data4} {\rm
Suppose that $S$ is a cancellative left Ore monoid with enveloping group $G=S^{-1}S$ and $\alpha :S\rightarrow
\ndr(A)$ is an action of $S$ on $A$ by \underline{corner
isomorphisms}. Then, let us construct a ring $\sinva$ as in \cite{pic}. First, define a relation $\sim$ on $S\times A$ as follows: $(s_1,a_1)\sim (s_2,a_2)$ if and only if there exist $t_1,t_2\in
S$ such that $t_1s_1=t_2s_2$ and $\alpha_{t_1}(a_1)= \alpha_{t_2}(a_2)$. This is an equivalence relation \cite[Lemma 2.1]{pic}, and we
write
$[s,a]$ for the equivalence class of a pair $(s,a)$. Let $\sinva=
(S\times A)/{\sim}$ be the set of these equivalence classes. The left
Ore condition guarantees ``common denominators'' in $\sinva$. By
\cite[Lemma 2.2 ff.]{pic}, there are well-defined associative
multiplication and addition on $\sinva$. For, given any $[s_1,a_1], [s_2,a_2]\in \sinva$, choose $t_1,t_2\in
S$ such that $t_1s_1=t_2s_2$, and set: (i) $[s_1,a_1]\cdot [s_2,a_2]= [t_1s_1,\alpha_{t_1}(a_1) \alpha_{t_2}(a_2)]$; (ii) $[s_1,a_1]+ [s_2,a_2]= [t_1s_1,\alpha_{t_1}(a_1)+ \alpha_{t_2}(a_2)]$. The distributive law is also routine, and so $\sinva$
becomes a non-unital ring with a distinguished idempotent $[1_S, 1_A]$.
}
\end{noname}
\vspace{.1truecm}

\begin{remark}\label{Rem:Laca}
{\rm This procedure can be seen as a different way for obtaining Laca's construction of $A_S$ \cite{Laca}. For, we will proof that $\sinva$ is isomorphic to the direct limit algebra defined by Laca. Indeed, we consider the (upwards) direct system of rings $(A_s, f_{s, ts} )_{\{s,t\in S\}}$, where $A_s:=A$ for every $s\in S$, while $f_{s,ts}:A_s\rightarrow A_{ts}$ is defined by the rule $f_{s,ts}(a)=\alpha_t(a)$. If we denote $A_S:=\varinjlim (A_s, f_{s, ts} )$, it is clear that $a_1\in A_{s_1}$ and $a_2\in A_{s_2}$ will represent the same element in $A_S$ if and only if there exist $t_1,t_2\in S$ such that $t_1s_1=t_2s_2$ and $\alpha_{t_1}(a_1)=\alpha_{t_2}(a_2)$. Now, for each $s\in S$ we define a map $\varphi_s: A_s\rightarrow \sinva$ by the rule $\varphi_s (a)=[s,a]$. This is a well-defined ring morphism, and for any $t\in S$ it is easy to see that $\varphi_s=\varphi_{ts}\circ f_{s,ts}$. Thus, there exists a unique ring morphism $\Phi: A_S\rightarrow \sinva$ sendind $[a_s]$ to $[s,a_s]$. Clearly, $\Phi$ is onto. On the other side, if $0=\Phi([a_s])$, then $0=[s, a_s]$, so that there exists $t\in S$ such that $\alpha_t(a_s)=0$. Hence, $[a_s]=0$, whence $\Phi$ is one-to-one, and thus an isomorphism.
}
\end{remark}

Next, let extend $\alpha$ to an action of $S$ on $\sinva$. 

\begin{lemma}[{\cite[Lemmas 3.6 \& 3.7]{AGGBP}}]\label{actionsinva}$\mbox{ }$
\begin{enumerate}
\item The action of $\alpha$ on $A$ extends to an action $\widehat{\alpha}: S\rightarrow
\aut(\sinva)$. Concretely, given any $s\in S$ and $[t,a]\in \sinva$, set $\alsom_s([t,a])=
[s',\alpha_{t'}(a)]$ for $s',t'\in S$ such that $s's=t't$.
\item The rule $a\mapsto [1_S,a]$ defines an $S$-equivariant ring embedding
$\phi: A\rightarrow \sinva$ with image $[1_S,1_A]\cdot \sinva\cdot [1_S,1_A]$.
\end{enumerate}
\end{lemma}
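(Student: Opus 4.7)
The plan is to establish part (1) by defining $\widehat{\alpha}_s$ on representatives via the stated formula, verifying well-definedness, then checking that it is a ring homomorphism, a bijection, and that $s \mapsto \widehat{\alpha}_s$ preserves composition. For well-definedness, fix $[t,a] \in \sinva$ and $s \in S$. The left Ore condition produces at least one pair $(s',t') \in S \times S$ with $s's = t't$. If $(s'', t'')$ is another such pair, the Ore condition applied to $s'$ and $s''$ yields $u_1, u_2 \in S$ with $u_1 s' = u_2 s''$, and then cancellativity of $S$, applied to the chain
\[ u_1 t' \cdot t = u_1 s' \cdot s = u_2 s'' \cdot s = u_2 t'' \cdot t, \]
forces $u_1 t' = u_2 t''$. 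This allows one to rewrite
\[ [s', \alpha_{t'}(a)] = [u_1 s', \alpha_{u_1 t'}(a)] = [u_2 s'', \alpha_{u_2 t''}(a)] = [s'', \alpha_{t''}(a)], \]
so the formula does not depend on $(s', t')$. Independence from the chosen representative $(t, a)$ of the class is an analogous Ore-refinement argument.

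Next I would check that $\widehat{\alpha}_s$ preserves addition and multiplication by choosing a common denominator for $[t_1, a_1]$ and $[t_2, a_2]$ and matching the two formulas via a further Ore refinement; this is bookkeeping. Bijectivity is direct: for surjectivity, given $[u, b] \in \sinva$, the pair $(s', t') = (u, 1_S)$ solves $s' s = t'(us)$, so $\widehat{\alpha}_s([us, b]) = [u, b]$; for injectivity, if $[s', \alpha_{t'}(a)] = 0$, then $\alpha_v(\alpha_{t'}(a)) = 0$ for some $v \in S$, and injectivity of the corner isomorphism $\alpha_{vt'}$ forces $a = 0$, hence $[t,a] = 0$. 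Finally, $\widehat{\alpha}_{s_1 s_2} = \widehat{\alpha}_{s_1} \widehat{\alpha}_{s_2}$ follows by combining the two Ore equations witnessing the right-hand side into a single one witnessing the left.

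Part (2) is more compact. The map $\phi(a) = [1_S, a]$ is a ring homomorphism because, on elements of the form $[1_S, a_i]$, one may take both common-denominator factors equal to $1_S$, reducing the $\sinva$-operations to those of $A$. Injectivity follows from $[1_S, a] = 0 \Rightarrow \alpha_v(a) = 0 \Rightarrow a = 0$. Equivariance $\widehat{\alpha}_s \phi = \phi \alpha_s$ follows by choosing $(s', t') = (1_S, s)$ in the defining formula. For the image, a direct application of the multiplication in $\sinva$ gives
\[ [1_S, 1_A] \cdot [t, a] \cdot [1_S, 1_A] = [t, p_t a p_t], \]
and since $\alpha_t$ is a corner isomorphism $A \to p_t A p_t$, there is a unique $b \in A$ with $p_t a p_t = \alpha_t(b)$; hence $[t, p_t a p_t] = [t, \alpha_t(b)] = [1_S, b] = \phi(b)$. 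Thus the corner equals $\phi(A)$.

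The main obstacle is the well-definedness of $\widehat{\alpha}_s$, which requires two separate Ore-refinement verifications (independence from $(s', t')$ and from the representative of $[t,a]$), and it is here that cancellativity of $S$ is essential: without cancellativity one cannot convert the identity $u_1 t' t = u_2 t'' t$ into $u_1 t' = u_2 t''$. The corner-isomorphism hypothesis, in contrast, enters only in the image computation of part (2), and — through injectivity — in the verification that $\widehat{\alpha}_s$ has trivial kernel.
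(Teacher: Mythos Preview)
The paper does not supply its own proof of this lemma: it is quoted verbatim from \cite[Lemmas 3.6 \& 3.7]{AGGBP}, and the surrounding text only adds commentary (Remark~\ref{nocancelnocorner}) on where the hypotheses enter. Your sketch is correct and is the standard localization argument one would expect in that reference; in particular your computation of the corner $[1_S,1_A]\cdot\sinva\cdot[1_S,1_A]$ and your identification of the corner-isomorphism hypothesis as the sole ingredient needed for the image statement match Remark~\ref{nocancelnocorner}(2) exactly.

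There is one inaccuracy worth flagging, however. In your closing paragraph you assert that cancellativity of $S$ is \emph{essential} for well-definedness, because it is needed to pass from $u_1t'\cdot t = u_2t''\cdot t$ to $u_1t' = u_2t''$. The paper explicitly contradicts this in Remark~\ref{nocancelnocorner}(3): left reversibility alone suffices. Indeed, from $u_1t'\,t = u_2t''\,t$ left reversibility produces $v\in S$ with $vu_1t' = vu_2t''$, and since you already have $u_1s' = u_2s''$ (hence $vu_1s' = vu_2s''$), the pair $(vu_1,vu_2)$ witnesses $(s',\alpha_{t'}(a))\sim(s'',\alpha_{t''}(a))$ just as well. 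So your proof goes through under the weaker hypothesis, and your diagnosis of where cancellativity is indispensable should be revised.
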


\begin{remark}\label{nocancelnocorner}
{\rm $\mbox{ }$
\begin{enumerate}
\item Under Laca's picture of $\sinva$, given in Remark \ref{Rem:Laca}, the definition of $\widehat{\alpha}_s$ is exactly the one stated by Laca in \cite{Laca}.
\item The hypothesis that $\alpha_s(A)=p_sAp_s$ for every $s\in S$ is \underline{only necessary} to prove that $\phi (A)=[1_S,1_A]\cdot \sinva\cdot [1_S,1_A]$.
\item The hypothesis of $S$ being cancellative can be weakened to $S$ being left reversible, and the construction of the ring $S^{-1}A$ still works correctly (c.f. \cite[Lemmas 2.1 \& 2.2]{pic}). Moreover, the action of $\alpha$ on $A$ still extends to an action $\alpha: S\rightarrow \aut(\sinva)$ with the same definition, and the map $\phi$ is still a $S$-equivariant embedding (c.f. \cite[Theorem 2.4]{pic}).
\end{enumerate}
}
\end{remark}

Hence, we obtain the dilation result for fractional skew monoid rings in the case of actions given by corner isomorphisms.

\begin{proposition}[{\cite[Proposition 3.8]{AGGBP}}]\label{genunit}
Let $G$ be a group and $S$ a submonoid of $G$ such
that $G=S^{-1}S$. Let $\alpha :S\rightarrow \ndr(A)$ be
an action of $S$ on $A$ by corner isomorphisms. Then there exist a
an action $\alsom: G\rightarrow \aut(\sinva)$, and a nonzero
idempotent $e$ in $\sinva$ such that $\alsom _s(e)\le e$ for all $s\in
S $ and
$$\sopas \cong e((\sinva)*_{\hat\alpha}G)e$$ 
{\rm(}as $G$-graded
rings{\rm)}.
\end{proposition}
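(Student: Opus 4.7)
The plan is to reduce the statement to Proposition \ref{fullcorner1} by passing to the larger ring $\sinva$, on which $S$ acts by automorphisms. First I invoke Lemma \ref{actionsinva} to obtain the $S$-equivariant ring embedding $\phi:A\hookrightarrow \sinva$ with image $e\sinva e$, where $e=[1_S,1_A]$, and to extend $\alpha$ to $\widehat{\alpha}:S\to\aut(\sinva)$. Since $\aut(\sinva)$ is a group, the universal property of the group of fractions $G=S^{-1}S$ extends $\widehat{\alpha}$ uniquely to a group homomorphism $\widehat{\alpha}:G\to\aut(\sinva)$.

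Next I verify that $\widehat{\alpha}_s(e)\le e$ for every $s\in S$. Applying the explicit formula in Lemma \ref{actionsinva}(1) with the choice $s'=1_S$, $t'=s$ (which satisfy $s's=t'\cdot 1_S$), one computes $\widehat{\alpha}_s(e)=[1_S,\alpha_s(1_A)]=[1_S,p_s]$, and a direct check in $\sinva$ gives $[1_S,p_s]\cdot e=[1_S,p_s]=e\cdot[1_S,p_s]$, as required.

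With these verifications in place, the data $(\sinva,G,S,\widehat{\alpha},e)$ fits the hypotheses in (\ref{data3}), so Proposition \ref{fullcorner1}(3) produces a $G$-graded isomorphism
\begin{equation*}
\sop*_{\widehat{\alpha}'}(e\sinva e)*_{\widehat{\alpha}'}S \;\cong\; e\bigl(\sinva*_{\widehat{\alpha}}G\bigr)e,
\end{equation*}
where $\widehat{\alpha}'$ denotes the restriction of $\widehat{\alpha}$ to $e\sinva e$. To close the argument I identify $e\sinva e$ with $A$ via $\phi$---an isomorphism that crucially uses the corner isomorphism hypothesis (cf.\ Remark \ref{nocancelnocorner}(2))---and observe that by $S$-equivariance of $\phi$ the transported action $\widehat{\alpha}'$ coincides with $\alpha$. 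Consequently the left-hand side is $G$-graded isomorphic to $\sopas$, giving the desired conclusion.

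The main obstacle is the careful bookkeeping required to transport the $G$-grading and the respective actions through the chain of identifications. The grading is inherited directly from Proposition \ref{fullcorner1}(3), and the matching of actions follows from the equivariance built into Lemma \ref{actionsinva}; both of these steps depend essentially on having $\phi(A)=e\sinva e$, which is precisely where the hypothesis that $\alpha$ acts by corner isomorphisms is used.
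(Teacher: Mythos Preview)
Your proof is correct and follows precisely the route the paper sets up: the paper does not give an independent proof of this proposition but simply cites it from \cite{AGGBP}, having arranged the preceding recalled results---the construction of $\sinva$ in (\ref{data4}), Lemma~\ref{actionsinva}, and Proposition~\ref{fullcorner1}---so that the proposition follows by exactly the assembly you carry out. Your explicit verification that $\widehat{\alpha}_s(e)=[1_S,p_s]\le e$ and your care in noting where the corner-isomorphism hypothesis enters (namely in the equality $\phi(A)=e\sinva e$, cf.\ Remark~\ref{nocancelnocorner}(2)) are exactly the points one must check, and they match the logic of \cite[Section~3]{AGGBP}.
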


Certainly, Proposition \ref{genunit} is the converse of Proposition \ref{fullcorner1} under actions by corner isomorphisms. Now, applying these results and those of Section 2, we obtain the main result of the paper.

\begin{theorem}\label{maintheorem}
Let $A$ be a unital ring, let $S$ be a cancellative left Ore monoid with enveloping group $G=S^{-1}S$, and let $\alpha :S\rightarrow \ndr(A)$ be an action of $S$ on $A$. If $I:=\bigcup_{s'\in S}
\ker(\alpha_{s'})$, then $\alpha$ extends to an action $\widehat{\alpha}:G\rightarrow \aut(S^{-1}(S_-(A/I)S_+))$, and there exists a nonzero idempotent $e\in (S^{-1}(S_-(A/I)S_+))$ such that $\alsom _s(e)\le e$ for all $s\in
S $ and
$$\sopas \cong e((S^{-1}(S_-(A/I)S_+))*_{\widehat{\alpha}}G)e$$
as $G$-graded rings.
\end{theorem}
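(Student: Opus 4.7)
The plan is to chain three successive replacements of the algebraic dynamical system so that Proposition \ref{genunit} becomes directly applicable, without ever modifying the universal object $\sopas$. At each step the fractional skew monoid ring on the new data equals the one on the old data, so at the end the desired isomorphism follows formally.

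First, I pass from $(A,S,\alpha)$ to $(A/I, S, \alpha')$. By Proposition \ref{kernel}(2)--(4), the ideal $I=\bigcup_{s'\in S}\ker(\alpha_{s'})$ is $\alpha$-invariant, the induced homomorphism $\alpha':S\rightarrow \ndz(A/I)$ is pointwise injective, and
$$\sopas \;=\; \sop*_{\alpha'}(A/I)*_{\alpha'}S.$$
So without loss of generality every $\alpha'_s$ is injective. Next, I apply the construction of Section \ref{injtocorner} to $(A/I,S,\alpha')$. Setting $B:=S_-(A/I)S_+$, Lemma \ref{Lemma5nou} produces an action $\overline{\alpha'}:S\rightarrow \ndr(B)$ by corner isomorphisms extending $\alpha'$, and Lemma \ref{aquestcalia} gives
$$\sop*_{\alpha'}(A/I)*_{\alpha'}S \;=\; \sop*_{\overline{\alpha'}}B*_{\overline{\alpha'}}S.$$
After these two steps the new dynamical system $(B,S,\overline{\alpha'})$ satisfies precisely the hypotheses of \textbf{\ref{data4}}: $S$ is a cancellative left Ore monoid with $G=S^{-1}S$, and $\overline{\alpha'}$ acts by corner isomorphisms.

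Finally, I invoke Lemma \ref{actionsinva} to form $S^{-1}B=S^{-1}(S_-(A/I)S_+)$, to extend $\overline{\alpha'}$ to an action $\widehat{\alpha}:G\rightarrow \aut(S^{-1}B)$, and to identify $B$ with the corner $e(S^{-1}B)e$ for the idempotent $e=[1_S,1_B]$. Proposition \ref{genunit} then yields $\widehat{\alpha}_s(e)\le e$ for every $s\in S$ and a $G$-graded isomorphism
$$\sop*_{\overline{\alpha'}}B*_{\overline{\alpha'}}S \;\cong\; e\bigl(S^{-1}B *_{\widehat{\alpha}} G\bigr)e.$$
Composing this with the previous two identities delivers the statement of the theorem.

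There is no real obstacle once the machinery of Sections 1 and 2 is in place; the only point that needs attention is verifying that the hypotheses line up at each passage. Specifically, the first reduction is what permits the second (Lemma \ref{Lemma5nou} requires $\alpha$ injective), and the second is what permits the third (Proposition \ref{genunit} requires the action to be by corner isomorphisms). It is also worth noting that when $\alpha$ is already injective then $I=0$ and the outer reduction is trivial, and when $\alpha$ already acts by corner isomorphisms then $B=A$ by Remark \ref{Remark4nou}, so that the statement collapses to the original Proposition \ref{genunit}; this confirms that the theorem is a genuine common generalization of both prior results.
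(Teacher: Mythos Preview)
Your proof is correct and follows essentially the same three-step chain as the paper: reduce to injective endomorphisms via Proposition \ref{kernel}, pass to $S_-(A/I)S_+$ with its corner-isomorphism action via Lemmas \ref{Lemma5nou} and \ref{aquestcalia}, and then apply Proposition \ref{genunit}. Your added commentary on hypothesis-tracking and the degenerate cases is a nice touch but does not alter the underlying argument.
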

\begin{proof}
By Proposition \ref{kernel}(4), 
$$\sop*_\alpha A*_\alpha S= \sop*_{\alpha'} (A/I)*_{\alpha'} S$$ 
for an action $\alpha': S\rightarrow \ndr(A/I)$ by injective homomorphisms. Then, by Lemma \ref{aquestcalia}, 
$$ \sop*_{\alpha'} (A/I)*_{\alpha'} S = \sop*_{\overline{\alpha}} S_-(A/I)S_+*_{\overline{\alpha}} S,$$ where $\overline{\alpha}$ is an action by corner isomorphisms by Lemma \ref{Lemma5nou}. Thus, we can apply Proposition \ref{genunit} to $\sop*_{\overline{\alpha}} S_-(A/I)S_+*_{\overline{\alpha}} S$, so we are done.
\end{proof}

As an immediate consequence we have

\begin{corollary}\label{maintheorem2}
Let $A$ be a unital ring, let $S$ be a cancellative left Ore monoid with enveloping group $G=S^{-1}S$, and let $\alpha :S\rightarrow \ndr(A)$ be an action of $S$ on $A$ by injective homomorphisms. Then, $\alpha$ extends to an action $\widehat{\alpha}:G\rightarrow \aut(S^{-1}(S_-AS_+))$, and there exists a nonzero idempotent $e\in (S^{-1}(S_-AS_+))$ such that $\alsom _s(e)\le e$ for all $s\in
S $ and
$$\sopas \cong e((S^{-1}(S_-AS_+))*_{\widehat{\alpha}}G)e$$
as $G$-graded rings.
\end{corollary}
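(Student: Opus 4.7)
The plan is to deduce this directly from Theorem \ref{maintheorem} by observing that the injectivity hypothesis forces the relevant kernel to be trivial. Since each $\alpha_{s'}$ is injective, $\ker(\alpha_{s'})=0$ for every $s'\in S$, and therefore
\[
I=\bigcup_{s'\in S}\ker(\alpha_{s'})=0.
\]
In particular $A/I=A$ as unital rings, and the induced action $\alpha'$ on $A/I$ furnished by Proposition \ref{kernel}(3) coincides with $\alpha$ itself.

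Substituting these identifications into Theorem \ref{maintheorem}, the ring $S_-(A/I)S_+$ becomes $S_-AS_+$, the extended action on the localization becomes $\widehat{\alpha}\colon G\to \aut(S^{-1}(S_-AS_+))$, and the resulting isomorphism
\[
\sopas \cong e\bigl((S^{-1}(S_-AS_+))*_{\widehat{\alpha}}G\bigr)e
\]
of $G$-graded rings is precisely the statement to be proved, with $e$ being the nonzero idempotent produced by the theorem and satisfying $\widehat{\alpha}_s(e)\le e$ for all $s\in S$.

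There is essentially no obstacle here, since the work has already been done: the nontrivial inputs are Proposition \ref{kernel} (allowing us to quotient by the common kernel to reduce to injective actions), Lemma \ref{aquestcalia} (replacing the base ring $A$ by $S_-AS_+$ to upgrade injective homomorphisms into corner isomorphisms via the extended action $\overline{\alpha}$ of Lemma \ref{Lemma5nou}), and Proposition \ref{genunit} (realizing the fractional skew monoid ring as a full corner in a skew group ring once the action is by corner isomorphisms). Under the injectivity hypothesis of this corollary, the first reduction is trivial, so only the second and third ingredients are actually invoked. Thus the statement follows as an immediate specialization of Theorem \ref{maintheorem}.
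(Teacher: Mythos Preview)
Your proof is correct and matches the paper's own treatment: the paper presents this corollary as ``an immediate consequence'' of Theorem \ref{maintheorem} without further argument, and your observation that injectivity of each $\alpha_{s'}$ forces $I=0$ (hence $A/I=A$) is exactly the specialization needed.
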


Because of Remark \ref{norole}, Corollary \ref{maintheorem2} is the converse of Proposition \ref{fullcorner1} for actions by injective homomorphisms. If $\alpha$ acts by \underline{unital} injective homomorphisms, then Remark \ref{unitalmaps} and Lemma \ref{Lemma5nou} imply that $\overline{\alpha}$ acts by unital automorphisms of $S_-AS_+$. Hence, 
$$\SopaS = (S_-AS_+)*_{\overline{\alpha}}G$$ 
by Definition \ref{newring}, and thus we have

\begin{corollary}\label{unitalmaps2}
Let $A$ be a unital ring, let $S$ be a cancellative left Ore monoid with enveloping group $G=S^{-1}S$, and let $\alpha :S\rightarrow \ndr(A)$ be an action of $S$ on $A$ by injective unital homomorphisms. Then, 
$$\sopas = (S_-AS_+)*_{\overline{\alpha}}G.$$
\end{corollary}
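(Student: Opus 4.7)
The plan is to combine Lemma \ref{aquestcalia}, Remark \ref{unitalmaps}, and the observation that, once the relevant action is by automorphisms of the base ring, the fractional skew monoid ring construction collapses to the classical skew group ring.

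First I apply Lemma \ref{aquestcalia} to rewrite $\sopas = \SopaS$. Since $\alpha$ is unital, $p_s = \alpha_s(1_A) = 1_A$ for every $s \in S$, and by Remark \ref{unitalmaps} each $\overline{\alpha}_s$ lies in $\aut(S_-AS_+)$. The universal property of the group of fractions $G = S^{-1}S$ (available because $S$ is cancellative left Ore) then extends $\overline{\alpha}$ uniquely to an action $\overline{\alpha}: G \to \aut(S_-AS_+)$ via $\overline{\alpha}_{s^{-1}t} = \overline{\alpha}_s^{-1}\overline{\alpha}_t$; well-definedness on representatives follows from the left Ore condition together with the fact that $\overline{\alpha}$ now takes values in a group.

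Next, inside $\SopaS$ the relations (3)--(4) of Definition \ref{newring} specialise, thanks to $p_s = 1$, to $s_-s_+ = 1 = s_+s_-$, so each $s_+$ is a two-sided invertible element with inverse $s_-$. Hence the monoid map $s \mapsto s_+$ factors through a unique group homomorphism $u: G \to U(\SopaS)$ given by $u_{s^{-1}t} = s_-t_+$, and the relation $(2')$, rewritten as $s_+ b s_- = \overline{\alpha}_s(b)$ for $b \in S_-AS_+$, propagates (using $s_+^{-1} = s_-$) to the group-level covariance identity $u_g b u_g^{-1} = \overline{\alpha}_g(b)$ for every $g \in G$.

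Finally I match the two universal properties. The skew group ring $(S_-AS_+) *_{\overline{\alpha}} G$ is universal among pairs $(\iota, v)$ consisting of a unital ring morphism $\iota: S_-AS_+ \to R$ and a group homomorphism $v: G \to U(R)$ with $v_g \iota(b) v_g^{-1} = \iota(\overline{\alpha}_g(b))$. Any such datum restricts to a covariant pair for $(S_-AS_+, S, \overline{\alpha})$ in the sense of Definition \ref{newcovariance} by setting $s_+ := v_s$ and $s_- := v_{s^{-1}}$; conversely, the universal covariant pair of $\SopaS$ produces such a datum via the extension $u$ built above. The two universal objects therefore coincide canonically, yielding the equality. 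The proof is essentially bookkeeping; the only subtlety is checking independence of the extensions of $\overline{\alpha}$ and $u$ on the choice of representative $s^{-1}t$ for $g \in G$, which is precisely what the Ore condition and invertibility of $s_+$ provide.
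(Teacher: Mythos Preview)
Your argument is correct and follows the same line as the paper: invoke Lemma~\ref{aquestcalia} to pass to $\SopaS$, use Remark~\ref{unitalmaps} (together with Lemma~\ref{Lemma5nou}) to see that $\overline{\alpha}$ acts by unital automorphisms so that each $s_+$ becomes a unit, and then identify the fractional skew monoid ring with the skew group ring. The paper compresses the last step into the phrase ``by Definition~\ref{newring}'', whereas you spell out the matching of universal properties and the extension of $\overline{\alpha}$ and $s\mapsto s_+$ to $G$; this extra detail is welcome but does not constitute a different route.
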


\begin{remark}\label{Remark_Laca2bis}{\rm
In terms of covariant representations, Corollary \ref{maintheorem2} says that, because of Remark \ref{Remark_Laca1}, we can replace any algebraic dynamical system $(A,S,\alpha)$ in which $\alpha$ does not act by corner isomorphisms (``with not necessarily hereditary range'' in $C^*$-algebra terms) by a new one $(S_-AS_+, S, \overline{\alpha})$ in which $\overline{\alpha}$ acts by corner isomorphisms (``with hereditary range'' in $C^*$-algebra terms), so that the algebraic dilation construction applies. Moreover, both dynamical systems share the same universal initial object. So, Corollary \ref{unitalmaps2} means that the construction in \cite[Section 3]{AGGBP} give us a dilation result even in the case of unital homomorphisms.
}
\end{remark}

Let us close this section by giving an example of application of Corollary \ref{unitalmaps2}, which benefits from Lemma \ref{Lemma:DirectLimit1} for the computation of $S_-AS_+$.

\begin{example}\label{Example1}
{\rm For any natural number $n\geq 2$ and any field $K$, consider $L_n$ the universal $K$-algebra generated by elements $x_1, \dots, x_n, y_1, \dots , y_n$ satisfying the relations: (i) $x_iy_j=\delta_{i,j}$ for every $1\leq i,j\leq n$; (ii) $1=\sum\limits_{i=1}^ny_ix_i$; these algebras are known as Leavitt algebras \cite{Lvtt}, and are the algebraic counterpart of Cuntz algebras \cite{C1}. Let $\alpha\in \mbox{End}(L_n)$ the unital endomorphism defined by the rule $\alpha (a)=\sum\limits_{i=1}^ny_iax_i$, which is outer. Now, consider the fractional skew monoid ring 
$${{\Z^+}^{\text{\rm op}}}*_{\alpha} L_n*_{\alpha} \Z^+.$$ 
According to Corollary \ref{unitalmaps2},
$${{\Z^+}^{\text{\rm op}}}*_{\alpha} L_n*_{\alpha} \Z^+ = (S_-L_nS_+)*_{\widehat{\alpha}}\Z$$
as $\Z$-graded algebras. By Lemma \ref{Lemma:DirectLimit1},
$$S_-L_nS_+\cong \varinjlim \left( L_n, \alpha^m \right).$$
Now, in view of the fact that the set $\{ y_ix_j\}_{1\leq i,j\leq n}$ is a system of matrix-units for the isomorphisms $L_n\cong M_n(L_n)$, it is easy to see that $\alpha:L_n\rightarrow L_n$ acts as the diagonal embedding from $L_n$ to $M_n(L_n)$. Hence, 
$$S_-L_nS_+\cong \varinjlim \left( L_n, \alpha \right)\cong M_{n^{\infty}}(L_n),$$
so that 
$$(S_-L_nS_+)*_{\widehat{\alpha}}\Z\cong M_{n^{\infty}}(L_n)*_{\overline{\alpha}}\Z$$
for an outer action $\overline{\alpha}$ of $\Z$ on $M_{n^{\infty}}(L_n)$. Clearly, $M_{n^{\infty}}(L_n)$ is a purely infinite simple ring (a property for rings analog to purely infinite simple $C^*$-algebras, see \cite{AGP} for a formal definition). Hence, an easy adaptation of the results in \cite[Section 4]{AGGBP}  (see e.g. \cite[Theorem 1.2]{LP}) shows that $M_{n^{\infty}}(L_n)*_{\overline{\alpha}}\Z$, and thus ${{\Z^+}^{\text{\rm op}}}*_{\alpha} L_n*_{\alpha} \Z^*$, is a purely infinite simple ring.
}
\end{example}

\section{Semigroup $C^*$-crossed products}

In this section, we consider the application of the results in the previous sections to the case of unital $C^*$-algebras. \vspace{.2truecm}

First notice that, given a unital $C^*$-algebra $A$ and a left Ore, left reversible monoid $S$ acting via $\alpha$ by injective $\ast$-endomorphisms of $A$, the associated dynamical system $(A,S, \alpha)$ satisfies the requirements of Laca's construction of the semigroup crossed product $A\times_{\alpha} S$. 

\begin{lemma}\label{KeyLink}
Let $A$ be a unital $C^*$-algebra, let $S$ be a left Ore, left reversible monoid, and let $\alpha$ be an action of $S$ by not necessarily unital injective $\ast$-endomorphisms of $A$. Then, $\sopas$ is a dense $\ast$-subalgebra of $A\times_{\alpha}S$.
\end{lemma}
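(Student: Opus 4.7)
The plan is to produce a $*$-algebra homomorphism $\Phi\colon\sopas\to A\times_{\alpha}S$ from the universal property of $\sopas$, to check that its image is dense, and then to settle injectivity---the serious point---via the dilation machinery already developed.

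\medskip

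\emph{The morphism and density.} Let $(i_A,i_S)$ be the canonical covariant pair of Definition \ref{newalgebra}, and set $\phi:=i_A$, $t_+:=i_S(t)$, $s_-:=i_S(s)^*$. All four relations of Definition \ref{newring} hold inside $A\times_{\alpha}S$: relation (3) is the defining isometry identity $i_S(s)^*i_S(s)=1$; relation (4) is covariance specialised at $1\in A$; and (1), (2) follow from the covariance identity $i_A(\alpha_t(a))=i_S(t)i_A(a)i_S(t)^*$ by right-multiplying with $i_S(t)$ (respectively left-multiplying with $i_S(s)^*$) and invoking (3). The universal property of $\sopas$ therefore produces a unital ring homomorphism $\Phi$, and it is automatically $*$-preserving because $\Phi(s_+)^*=\Phi(s_-)$ by construction. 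For density, Proposition \ref{sum}(1) shows that every element of $\sopas$ is a finite sum of monomials $s_-\phi(a)t_+$, so $\Phi(\sopas)$ is precisely the $*$-subalgebra of $A\times_{\alpha}S$ generated by $i_A(A)\cup i_S(S)$, whose closure, by Definition \ref{newalgebra}(3), is all of $A\times_{\alpha}S$.

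\medskip

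\emph{Injectivity---the main obstacle.} It suffices to exhibit one covariant Hilbert space representation $(\pi,V)$ of $(A,S,\alpha)$ whose canonical extension to $\sopas$ is faithful, since any covariant pair factors as $(\pi\times V)\circ\Phi$ through $A\times_{\alpha}S$ by the latter's universal property. I plan to produce such a representation through the dilation. First, Lemma \ref{aquestcalia} allows one to replace $(A,S,\alpha)$ by the corner-isomorphism system $(S_-AS_+,S,\overline{\alpha})$ without changing the fractional skew monoid ring. In the cancellative case, Corollary \ref{maintheorem2} realises $\sopas$ as a corner $e\bigl(S^{-1}(S_-AS_+)*_{\widehat{\alpha}}G\bigr)e$ of an honest skew group ring by $G=S^{-1}S$, while Laca's dilation theorem realises $A\times_{\alpha}S$ as the corresponding corner of $\overline{S^{-1}(S_-AS_+)}\rtimes_{\widehat{\alpha}}G$. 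The algebraic skew group ring $*$-embeds into the full $C^*$-crossed product: the scalar $*$-algebra $S^{-1}(S_-AS_+)$ injects into its $C^*$-closure since it is a direct limit of $*$-subalgebras of $A$ along injective $*$-maps, and then the faithful canonical conditional expectation of the group crossed product onto its neutral component detects nonzero elements of the algebraic skew group ring. Cutting by the projection $e$ yields a faithful Hilbert space realisation of $\sopas$, which must coincide with $\Phi$ by uniqueness in Definition \ref{newring}. The non-cancellative case proceeds by the analogous construction announced in Section 5.

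\medskip

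The main technical hurdle is therefore the careful matching of the two corner presentations---algebraic and analytic---and the verification that the composite embedding thus obtained is exactly $\Phi$; once that identification is in place, injectivity propagates from the (classical) faithfulness of the algebraic skew group ring inside the $C^*$-crossed product.
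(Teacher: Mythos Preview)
Your construction of $\Phi$ and the density argument are correct and coincide with the paper's.

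For injectivity the paper takes a quite different and much shorter route: it never touches the dilation machinery. Instead it applies the universal property of $A\times_{\alpha}S$ (Definition~\ref{newalgebra}(2)) directly to the universal covariant pair $(\phi_A,\phi_S)$ of $\sopas$, obtaining a representation $\phi_A\times\phi_S$ with $(\phi_A\times\phi_S)\circ i_A=\phi_A$ and $(\phi_A\times\phi_S)\circ i_S=\phi_S$. From $\phi_A$ injective (Proposition~\ref{kernel}) one gets $i_A$ injective, and analogously $\varphi$ restricts to a bijection $\phi_S(S)\to i_S(S)$; the paper concludes injectivity of $\varphi$ from this. What your route buys is an explicit faithful Hilbert-space model of $\sopas$; what the paper's route buys is a two-line argument uniform in $S$, with no case distinction between cancellative and non-cancellative monoids.

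Your dilation argument, as written, has two genuine gaps. First, in the non-cancellative case you defer to Section~5, but Theorem~\ref{finaltheorem} only furnishes a \emph{surjection} $\sopas\twoheadrightarrow e(\cdot)e$, not an embedding, so it cannot supply the faithful representation your strategy needs. Second, even in the cancellative case, your appeal to Laca's theorem to realise $A\times_{\alpha}S$ itself as the analytic corner is circular: Laca requires hereditary range, passing to hereditary range on the $C^*$-side is Proposition~\ref{Prop:Paschke1}(2), and in the paper that proposition is a \emph{consequence} of Lemma~\ref{KeyLink}. You do not actually need Laca here---once you have the faithful embedding $\rho\colon\sopas\hookrightarrow e\bigl(\overline{S^{-1}(S_-AS_+)}\rtimes_{\widehat\alpha}G\bigr)e$ via the conditional-expectation argument, the pair $(\rho\circ\phi_A,\rho\circ\phi_S)$ is already a covariant Hilbert-space representation of $(A,S,\alpha)$, and the induced map on $A\times_{\alpha}S$ composed with $\Phi$ equals $\rho$ by uniqueness in Definition~\ref{newring}; that alone gives injectivity of $\Phi$, with no reference to what $A\times_{\alpha}S$ looks like as a corner.
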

\begin{proof}
Let $(\phi_A, \phi_S)$ be the universal covariant pair of $\sopas$, and let $(i_A, i_S)$ be the universal covariant pair of $A\times_{\alpha}S$. Notice that both are associated to the same representation of $A\times_{\alpha}S$ on a Hilbert space $H$. By the universal property of $\sopas$, applied to the covariant pair $(i_A, i_S)$, there exists a unique $\ast$-homomorphism
$$\varphi: \sopas \rightarrow A\times_{\alpha}S$$
such that $i_A=\varphi \circ \phi_A$ and $i_S=\varphi \circ \phi_S$. Moreover,  (2) in Definition \ref{newalgebra} applies, by the above remark, to the covariant pair $(\phi_A, \phi_S)$. So, there exists a representation $\phi_A\times \phi_S$ such that $\phi_A=(\phi_A\times \phi_S)\circ i_A$ and $\phi_S=(\phi_A\times \phi_S)\circ i_S$.

Now, by Proposition \ref{kernel}, $\phi_A$ is injective, and then so it is $i_A$ by the above argument. By the same argument $\varphi$ restricts to a $\ast$-isomorphism of monoids between $\phi_S(S)$ and $i_S(S)$. Hence, $\varphi$ is injective.

Finally, by (3) in Definition \ref{newalgebra}, $\text{im}(\varphi)$ is a dense $\ast$-subalgebra of $A\times_{\alpha}S$, so we are done.
\end{proof}

Hence, we obtain a slight improvement of Laca's result. Concretely, we do not require $S$ to be cancellative in order to realize $A\times_{\alpha} S$ as a full corner of a group $C^*$-crossed product, and thus guarantee that it is nontrivial. In particular, notice that this approach skips the (implicit) requirement of Laca, which asks the endomorphisms to have hereditary range, a technical fact used to construct the dilation.\vspace{.2truecm}

By Lemma \ref{KeyLink}, $A\times_{\alpha} S$ is nontrivial, contains a $\ast$-isomorphic copy of $\sopas$, and moreover it is the norm completion of $\sopas$ in a suitable norm. Under this picture, notice that $A\subset S_-AS_+\subset \sopas \subset A\times_{\alpha} S$ as unital algebras. So, we can define $S^*AS$ to be the norm-completion of $S_-AS_+$ under the norm inherited by the inclusion. Thus, we can transfer all the results obtained in Section 2 to the context of $C^*$-algebras. Concretely we have

\begin{lemma}
$S^*AS$ is a unital sub-$C^*$-algebra of $A\times_{\alpha} S$, containing $A$ as unital sub-$C^*$-algebra.
\end{lemma}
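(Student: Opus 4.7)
The plan is to reduce the statement to the general fact that the norm-closure of a unital $\ast$-subalgebra inside a $C^*$-algebra is a unital sub-$C^*$-algebra. Everything of substance has already been packaged in the previous sections; what is left is bookkeeping: (i) promote the unital subring $S_-AS_+$ of Lemma \ref{Lemma3nou} to a unital $\ast$-subalgebra of $A\times_{\alpha} S$, and (ii) identify $A$ as a unital $C^*$-subalgebra sitting inside the completion.

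First I would recall, via Lemma \ref{KeyLink}, that $\sopas$ embeds as a dense $\ast$-subalgebra of $A\times_{\alpha} S$, and that under this embedding the isometries $s_+$ become genuine Hilbert-space isometries with $s_- = s_+^{\,*}$. Combined with Lemma \ref{Lemma3nou}, which says $S_-AS_+$ is a unital subring of $\sopas$ containing $A$, it is enough to check closure under the involution. But for any $s\in S$ and $a\in A$ one has
\[
(s_-\,a\,s_+)^{*} \;=\; s_+^{\,*}\,a^{*}\,s_-^{\,*} \;=\; s_-\,a^{*}\,s_+ \;\in\; S_-AS_+,
\]
so $S_-AS_+$ is a unital $\ast$-subalgebra of $A\times_{\alpha} S$.

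Next, $S^*AS$ is by definition the norm-closure of $S_-AS_+$ inside $A\times_{\alpha} S$. Since the involution and the algebra operations of a $C^*$-algebra are continuous and $1\in S_-AS_+$, its closure $S^*AS$ is automatically a unital $\ast$-subalgebra that is norm-closed, i.e.\ a unital sub-$C^*$-algebra of $A\times_{\alpha} S$. The unit of $S^*AS$ is the unit of $A\times_{\alpha}S$ coming from $1_A$, so there is no ambiguity.

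Finally, since $A \subseteq S_-AS_+ \subseteq S^*AS$ and $A$ is already a norm-closed unital $\ast$-subalgebra of $A\times_{\alpha}S$ (via the unital embedding $i_A$), the inclusion $A\hookrightarrow S^*AS$ is automatically a unital inclusion of $C^*$-algebras, with a common unit. I do not anticipate any genuine obstacle here; the only point that requires the analytic hypothesis (as opposed to the purely algebraic statements of Section 2) is the equality $s_-=s_+^{\,*}$, which is exactly what the $C^*$-setting provides through Lemma \ref{KeyLink}.
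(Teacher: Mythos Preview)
Your proposal is correct and follows essentially the same approach as the paper: the paper gives no explicit proof for this lemma, stating it instead as an immediate transfer of Lemma~\ref{Lemma3nou} to the $C^*$-setting via the embedding of Lemma~\ref{KeyLink}. You have simply spelled out the details the paper leaves implicit, including the $\ast$-closure of $S_-AS_+$ and the standard fact that the norm-closure of a unital $\ast$-subalgebra is a unital sub-$C^*$-algebra.
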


Certainly Lemma \ref{Lemma:DirectLimit1}, which allows to present $S_-AS_+$ as $\varinjlim \left( p_tAp_t, \alpha_{{\widehat{s}}\vert_{p_tAp_t}}\right)$, also applies when we consider the direct limit construction in the category of $C^*$-algebras and $\ast$-homomorphisms. Thus, we have

\begin{lemma}\label{Lemma:DirectLimit1SeaStar}
The $C^*$-algebra $S^*AS$ is isomorphic to a direct limit of $C^*$-algebras.
\end{lemma}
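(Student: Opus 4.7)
The plan is to reproduce the argument of Lemma \ref{Lemma:DirectLimit1} inside the category of $C^*$-algebras. First I would observe that, since each $\alpha_t$ is a $*$-endomorphism of $A$, the idempotent $p_t=\alpha_t(1)$ is a projection, so each corner $p_tAp_t$ is a unital $C^*$-subalgebra of $A$; and since $\alpha_{\widehat s}$ is an injective $*$-homomorphism of $C^*$-algebras, its restriction $\alpha_{\widehat s}|_{p_tAp_t}\colon p_tAp_t\to p_{\widehat s t}Ap_{\widehat s t}$ is automatically an isometric unital $*$-homomorphism. The directedness argument from Lemma \ref{Lemma:DirectLimit1} (which uses only the left Ore and left reversibility conditions on $S$) carries over verbatim, so $(p_tAp_t,\alpha_{\widehat s}|_{p_tAp_t})_{t\in S}$ is a direct system in the category of $C^*$-algebras and unital $*$-homomorphisms.

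Let $B$ denote its $C^*$-algebraic direct limit, which, because all connecting maps are isometric, is simply the completion of the algebraic direct union under the unique compatible $C^*$-norm. By Lemma \ref{Lemma:DirectLimit1} that algebraic direct union is $*$-isomorphic to $S_-AS_+$ via $t_-at_+\mapsto p_tap_t$. It then suffices to show that this identification is isometric when $S_-AS_+$ is equipped with the norm it inherits from its inclusion into $A\times_\alpha S$; taking norm-closures on both sides will then give $S^*AS\cong B$ as desired.

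To verify the isometry levelwise, I would exploit the $C^*$-identities $t_-=t_+^*$ and $t_+t_+^*=p_t$ which hold within $A\times_\alpha S$. For any $a\in A$ we then have
\[
t_-at_+ \;=\; t_+^*(p_tap_t)t_+\qquad\text{and}\qquad p_tap_t \;=\; t_+(t_-at_+)t_+^*,
\]
and since $\|t_+\|=\|t_+^*\|=1$ both relations are norm-decreasing, forcing $\|t_-at_+\|=\|p_tap_t\|$. Thus each component map $t_-At_+\to p_tAp_t$ is an isometric $*$-isomorphism, and so is the induced map on the direct union.

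The one genuinely substantive point — and the place where the main obstacle lies — is precisely this isometry check: one must pass from the algebraic relations of Definition \ref{newring} (which only yield the pair $t_-t_+=1$, $t_+t_-=p_t$) to the stronger $C^*$-relation $t_-=t_+^*$, which is available only once one works inside the $C^*$-completion $A\times_\alpha S$ guaranteed by Lemma \ref{KeyLink}. With that in hand, taking norm closures inside $A\times_\alpha S$ identifies $S^*AS$ with $B=\varinjlim^{\,C^*}(p_tAp_t,\alpha_{\widehat s}|_{p_tAp_t})$, establishing the claim.
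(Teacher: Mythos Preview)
Your proposal is correct and follows essentially the same route as the paper: the paper's entire argument for this lemma is the sentence preceding it, namely that Lemma~\ref{Lemma:DirectLimit1} ``also applies when we consider the direct limit construction in the category of $C^*$-algebras and $\ast$-homomorphisms.'' You have simply unpacked that assertion, and in fact supplied the one detail the paper leaves implicit---the isometry check $\|t_-at_+\|=\|p_tap_t\|$ via $t_-=t_+^*$ inside $A\times_\alpha S$---which is exactly what is needed to pass from the algebraic direct union to its $C^*$-completion.
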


As a consequence, in the same manner as in Section 2, we can prove 

\begin{proposition}\label{Prop:Paschke1}
$\mbox{ }$
\begin{enumerate}
\item The action $\alpha$ of $S$ on $A$ extends to an action $\overline{\alpha}:S\rightarrow \ndr(S^*AS)$ by corner $\ast$-isomorphisms (so that $\overline{\alpha}_s$ has hereditary range for every $s\in S$).
\item $A\times_{\alpha} S=(S^*AS)\times_{\overline{\alpha}} S$.
\end{enumerate}
\end{proposition}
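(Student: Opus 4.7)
The plan is to mirror the algebraic arguments of Section 2 in the $C^*$-setting, using density of $\sopas$ in $A\times_{\alpha}S$ (Lemma \ref{KeyLink}) to transfer everything, and then to compare universal $C^*$-norms in order to identify $A\times_{\alpha}S$ with $(S^*AS)\times_{\overline{\alpha}}S$.

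For part~(1), I would start by noticing that the formula $\overline{\alpha}_s(x) := s_+\, x\, s_-$, which was defined algebraically on $S_-AS_+$ in Lemma \ref{Lemma5nou}, is nothing but conjugation by the isometry $i_S(s)$ inside the $C^*$-algebra $A\times_{\alpha}S$ (because $s_- = s_+^{\ast}$ in this setting). Since conjugation by an isometry is a contractive $\ast$-homomorphism, $\overline{\alpha}_s$ is automatically norm-contractive on $S_-AS_+$ with respect to the norm inherited from $A\times_{\alpha}S$, and hence extends uniquely to a $\ast$-endomorphism of the norm completion $S^{\ast}AS$. The algebraic identity $\overline{\alpha}_s(S_-AS_+)\subseteq p_s(S_-AS_+)p_s$ from Lemma \ref{Lemma5nou}, together with the computation $p_s(S_-AS_+)p_s\subseteq \overline{\alpha}_s(S_-AS_+)$ performed there, then passes to closures, showing that $\overline{\alpha}_s$ is a corner $\ast$-isomorphism $S^{\ast}AS\to p_s(S^{\ast}AS)p_s$. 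Multiplicativity $\overline{\alpha}_{st}=\overline{\alpha}_s\overline{\alpha}_t$ is inherited from the algebraic level by density.

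For part~(2), the idea is that both $C^*$-algebras are universal completions of the \emph{same} dense $\ast$-subalgebra: by Lemma \ref{aquestcalia} we have $\sopas = \SopaS$ as $\ast$-algebras, and by Lemma \ref{KeyLink} this common algebra sits densely inside each of $A\times_{\alpha}S$ and $(S^{\ast}AS)\times_{\overline{\alpha}}S$. It therefore suffices to show that the two universal $C^*$-norms agree on it. One direction is immediate: every covariant representation of $(S^{\ast}AS,S,\overline{\alpha})$ restricts to a covariant representation of $(A,S,\alpha)$ (since $\overline{\alpha}|_A=\alpha$ and the covariance on the larger algebra subsumes the covariance on $A$), so $\|\cdot\|_{(S^{\ast}AS)\times_{\overline{\alpha}}S}\leq \|\cdot\|_{A\times_{\alpha}S}$. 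For the reverse inequality, given a covariant representation $(\pi,V)$ of $(A,S,\alpha)$ on a Hilbert space $H$, I would build a covariant representation $(\widetilde{\pi},V)$ of $(S^{\ast}AS,S,\overline{\alpha})$ by setting $\widetilde{\pi}(s_-as_+) := V(s)^{\ast}\pi(a)V(s)$ on the dense subalgebra and extending by continuity; the fact that this is well defined, multiplicative, and $\ast$-preserving follows from the covariance relation $\pi(\alpha_t(a))=V(t)\pi(a)V(t)^{\ast}$, and from the same direct-limit computation as in Lemma \ref{Lemma:DirectLimit1SeaStar}.

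The main obstacle, as I see it, lies in this last step: carefully verifying that the assignment $s_-as_+\mapsto V(s)^{\ast}\pi(a)V(s)$ is independent of the representative (i.e.\ that it respects the direct-limit identifications of Lemma \ref{Lemma:DirectLimit1SeaStar}) and that it is norm-contractive, so it extends to the completion $S^{\ast}AS$ and assembles with $V$ into a genuine covariant pair for $\overline{\alpha}$. Once this is in place, the resulting representation $\widetilde{\pi}\times V$ of $(S^{\ast}AS)\times_{\overline{\alpha}}S$ agrees with $\pi\times V$ of $A\times_{\alpha}S$ on the common dense subalgebra, yielding the opposite norm inequality and hence the claimed equality of $C^*$-algebras.
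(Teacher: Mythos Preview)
Your proposal is correct and follows exactly the route the paper indicates: the paper does not give an explicit proof of this proposition, stating only that it is proved ``in the same manner as in Section~2'', i.e.\ by transporting Lemmas~\ref{Lemma5nou} and~\ref{aquestcalia} to the $C^*$-setting via the density result of Lemma~\ref{KeyLink}. Your additional care in part~(2)---comparing universal norms by matching covariant representations of $(A,S,\alpha)$ with those of $(S^{\ast}AS,S,\overline{\alpha})$---is a reasonable and necessary elaboration that the paper leaves entirely implicit; the only small point to tidy up is that Lemma~\ref{KeyLink}, applied to $(S^{\ast}AS,S,\overline{\alpha})$, yields density of $\sop*_{\overline{\alpha}}(S^{\ast}AS)*_{\overline{\alpha}}S$ rather than of $\SopaS$, but since $S_-AS_+$ is norm-dense in $S^{\ast}AS$ this gap closes immediately.
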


\begin{remark}\label{Remark_Laca2}{\rm
In terms of covariant representations, Proposition \ref{Prop:Paschke1} says that we can replace $(A,S, \alpha)$ by a new dynamical system $(S^*AS, S, \overline{\alpha})$ such that the maps $\overline{\alpha}_s$ have hereditary range for every $s\in S$, while both dynamical systems have the same universal initial object associated. Thus, we are extending the scope of Laca's arguments to actions in which having hereditary range is not required.}
\end{remark}

Now, if $S$ is cancellative, since $S^{-1}A$ can be seen as a direct limit of $C^*$-algebras \cite{Laca} (see Remark \ref{Rem:Laca}), we can assume that $S^{-1}A$ denotes the corresponding $C^*$-algebra, whence the related results in \cite[Section 3]{AGGBP} apply for unital $C^*$-algebras. Hence, we have

\begin{theorem}\label{forseastar}
Let $A$ be a unital $C^*$-algebra, let $S$ be a cancellative left Ore monoid with enveloping group $G=S^{-1}S$, and let $\alpha :S\rightarrow \ndr(A)$ be an action of $S$ on $A$ by injective $\ast$-homomorphisms. Then, $\alpha$ extends to an action $\widehat{\alpha}:G\rightarrow \aut(S^{-1}(S^*AS))$, and there exists a full projection $e\in (S^{-1}(S^*AS))$ such that $\alsom _s(e)\le e$ for all $s\in
S $ and
$$A\times_{\alpha} S \cong e((S^{-1}(S^*AS))\times_{\widehat{\alpha}}G)e$$
is an $S$-equivariant $\ast$-isomorphism.
\end{theorem}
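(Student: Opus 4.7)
The plan is to reduce the statement to Laca's original dilation theorem by first replacing the given dynamical system with one that acts by corner $\ast$-isomorphisms, then invoking the known result in that easier setting.

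First I would apply Proposition \ref{Prop:Paschke1} to the dynamical system $(A,S,\alpha)$. Part (1) gives an extension $\overline{\alpha}:S\rightarrow \ndr(S^*AS)$ acting by corner $\ast$-isomorphisms (so in particular $\overline{\alpha}_s(S^*AS)=\overline{\alpha}_s(1)(S^*AS)\overline{\alpha}_s(1)$ is a hereditary $C^*$-subalgebra for each $s\in S$), and part (2) gives the identification
$$A\times_{\alpha}S = (S^*AS)\times_{\overline{\alpha}}S.$$
Thus without loss of generality we may work with the new dynamical system $(S^*AS,S,\overline{\alpha})$, whose endomorphisms now satisfy precisely the hereditary-range hypothesis required by Laca.

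Second, I would apply Laca's dilation result (which is the $C^*$-analog of Proposition \ref{genunit} above; here $S^*AS$ plays the role of $A$, and the direct-limit construction of $S^{-1}(S^*AS)$ in the category of $C^*$-algebras is available by Remark \ref{Rem:Laca} together with Lemma \ref{Lemma:DirectLimit1SeaStar}). Since $S$ is a cancellative left Ore monoid with enveloping group $G=S^{-1}S$ and $\overline{\alpha}$ acts by injective $\ast$-endomorphisms with hereditary range, Laca's theorem produces: (i) an extension $\widehat{\alpha}:G\rightarrow \aut(S^{-1}(S^*AS))$ of $\overline{\alpha}$; (ii) the full projection $e=[1_S,1_{S^*AS}]$ in $S^{-1}(S^*AS)$, with $\widehat{\alpha}_s(e)\le e$ for all $s\in S$; and (iii) an $S$-equivariant $\ast$-isomorphism
$$(S^*AS)\times_{\overline{\alpha}}S \;\cong\; e\bigl((S^{-1}(S^*AS))\times_{\widehat{\alpha}}G\bigr)e.$$
Chaining this with the identification from the previous paragraph yields the statement of the theorem.

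The main obstacle I anticipate is bookkeeping rather than conceptual: one must verify that the extension $\overline{\alpha}$ obtained algebraically in Lemma \ref{Lemma5nou} agrees on the dense subalgebra $S_-AS_+$ with the $\ast$-endomorphisms produced by Laca's construction on $S^*AS$, and that the direct-limit structure of $S^*AS$ described in Lemma \ref{Lemma:DirectLimit1SeaStar} is compatible with Laca's direct-limit description of the dilation algebra. Fullness of the projection $e$ and the fact that $\widehat{\alpha}_s(e)\le e$ for $s\in S$ are then immediate from the definition of $e$ as the class of the unit at level $1_S$, together with the Ore condition ensuring that every class $[s,a]$ can be translated into the corner $e(\,\cdot\,)e$. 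The $S$-equivariance of the resulting isomorphism follows from the fact that on the generating copies $\phi_A(A)$ and $\phi_S(S)$ (respectively $i_A(A)$ and $i_S(S)$) both sides implement $\overline{\alpha}$ by conjugation with the isometries $s_+$, so the universal property of Definition \ref{newalgebra} forces the intertwining globally.
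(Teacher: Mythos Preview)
Your proposal is correct and follows essentially the same route as the paper: the paper's argument for Theorem~\ref{forseastar} is the one-line justification preceding it, namely that after Proposition~\ref{Prop:Paschke1} reduces to the corner-isomorphism case, the dilation results of \cite[Section 3]{AGGBP} (equivalently, Laca's theorem) apply verbatim in the $C^*$-setting because $S^{-1}(S^*AS)$ is a $C^*$-direct limit by Remark~\ref{Rem:Laca}. Your two-step reduction---first Proposition~\ref{Prop:Paschke1}, then Laca/Proposition~\ref{genunit}---is exactly this, and the bookkeeping concerns you flag are precisely the routine compatibilities the paper absorbs into the phrase ``the related results apply for unital $C^*$-algebras.''
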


Theorem \ref{forseastar} means that, because of Proposition \ref{Prop:Paschke1}, we can extend the scope of Laca's techniques to the case of an action whose range is not necessarily hereditary. The extreme case of this situation occurs when all the maps $\alpha_s$ are unital and injective, but not isomorphisms. This is the situation of Cuntz's original question, that we answer in the affirmative.

\begin{corollary}\label{unitalinjseastar}
Let $A$ be a unital $C^*$-algebra, let $S$ be a cancellative left Ore monoid with enveloping group $G=S^{-1}S$, and let $\alpha :S\rightarrow \ndr(A)$ be an action of $S$ on $A$ by injective unital $\ast$-homomorphisms. Then, 
$$A\times_{\alpha} S = (S^*AS)\times_{\overline{\alpha}}G$$
is an $S$-equivariant $\ast$-isomorphism.
\end{corollary}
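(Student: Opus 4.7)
The plan is to deduce this directly from Proposition~\ref{Prop:Paschke1}, together with the observation that, in the unital case, the extended action $\overline{\alpha}$ consists of $\ast$-automorphisms, so that the semigroup crossed product over $S$ collapses onto the group crossed product over the enveloping group $G = S^{-1}S$. This is the $C^*$-algebraic analog of the argument that derives Corollary~\ref{unitalmaps2} from Corollary~\ref{maintheorem2}.

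First I would invoke Proposition~\ref{Prop:Paschke1}(2) to rewrite the left-hand side as $(S^*AS)\times_{\overline{\alpha}}S$, and then verify that $\overline{\alpha}$ actually takes values in $\aut(S^*AS)$. Since $\alpha$ is unital, $p_s = \alpha_s(1) = 1$ for every $s \in S$; hence by Lemma~\ref{Lemma5nou} the map $\overline{\alpha}_s$ is an injective $\ast$-endomorphism of $S^*AS$ with range $p_s(S^*AS)p_s = S^*AS$, and therefore a $\ast$-automorphism. This is the $C^*$-analog of Remark~\ref{unitalmaps}.

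Next, I would argue that $(S^*AS)\times_{\overline{\alpha}}S = (S^*AS)\times_{\overline{\alpha}}G$. For any covariant representation $(\pi,V)$ of $(S^*AS, S, \overline{\alpha})$, the covariance condition forces $V_sV_s^* = \pi(\overline{\alpha}_s(1)) = \pi(1) = 1$, so each $V_s$ is a unitary. Using the cancellative left Ore property, the representation of $S$ by unitaries extends uniquely to a unitary representation $\widetilde{V}$ of $G$ via $\widetilde{V}_{s^{-1}t} := V_s^*V_t$, and $\overline{\alpha}$ extends to a group action by setting $\overline{\alpha}_{s^{-1}} := \overline{\alpha}_s^{-1}$. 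Conversely, any covariant $G$-representation restricts to a covariant $S$-representation, and these two constructions are mutually inverse. This bijection between covariant pairs identifies the two universal $C^*$-algebras and delivers the desired $S$-equivariant $\ast$-isomorphism.

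The main technical obstacle is the well-definedness of $\widetilde{V}_{s^{-1}t} := V_s^*V_t$: one must check it depends only on $s^{-1}t \in G$. This is the standard Ore-type calculation --- given $s^{-1}t = (s')^{-1}t'$, one uses the left Ore condition and cancellativity to produce $r, r' \in S$ with $rs = r's'$ and $rt = r't'$, then invokes $V_rV_s = V_{rs} = V_{r's'} = V_{r'}V_{s'}$ (and similarly for $t,t'$) together with $V_r^*V_r = 1 = V_{r'}^*V_{r'}$ to conclude $V_s^*V_t = V_{s'}^*V_{t'}$. Although each step is routine, this is the only nontrivial bookkeeping in the argument.
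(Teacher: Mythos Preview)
Your proposal is correct and follows essentially the same route as the paper: the paper treats this corollary as the $C^*$-analog of Corollary~\ref{unitalmaps2}, obtained from Proposition~\ref{Prop:Paschke1}(2) together with the observation (Remark~\ref{unitalmaps}/Lemma~\ref{Lemma5nou}) that in the unital case $\overline{\alpha}$ acts by $\ast$-automorphisms, so the $S$-crossed product coincides with the $G$-crossed product. The paper does not spell out the covariant-pair bijection you give in your third paragraph---it simply invokes the universal property (``by Definition~\ref{newring}'' in the algebraic case) and states the $C^*$-version without proof---so your write-up is a faithful, more detailed rendering of the intended argument.
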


\begin{example}\label{Example2}
{\rm In analogy with Example \ref{Example1}, for any natural number $n\geq 2$ consider the $n$-th Cuntz algebra $\mathcal{O}_n$ , i.e. the $C^*$-algebra generated by pairwise orthogonal isometries $s_1, \dots, s_n$ satisfying that $1=\sum\limits_{i=1}^ns_is_i^*$. Let $\alpha\in \mbox{End}(\mathcal{O}_n)$ the unital endomorphisms defined by the rule $\alpha (a)=\sum\limits_{i=1}^ns_ias_i^*$, which is outer. Now, consider the (Paschke) crossed product 
$$\mathcal{O}_n \rtimes_{\alpha} \N.$$ 
According to Proposition \ref{Prop:Paschke1}(2),
$$\mathcal{O}_n \rtimes_{\alpha} \N = (S^*\mathcal{O}_nS)\times_{\widehat{\alpha}}\Z.$$
By Lemma \ref{Lemma:DirectLimit1SeaStar},
$$S^*\mathcal{O}_nS\cong \varinjlim \left( \mathcal{O}_n, \alpha^m \right).$$
Now, in view of the fact that the set $\{ s_is_j^*\}_{1\leq i,j\leq n}$ is a system of matrix-units for the isomorphisms $\mathcal{O}_n\cong M_n(\mathcal{O}_n)$, it is easy to see that $\alpha:\mathcal{O}_n\rightarrow \mathcal{O}_n$ acts as the diagonal embedding from $\mathcal{O}_n$ to $M_n(\mathcal{O}_n)$. Hence, 
$$S^*\mathcal{O}_nS\cong \varinjlim \left( \mathcal{O}_n, \alpha \right)\cong M_{n^{\infty}}(\mathcal{O}_n),$$
so that 
$$(S^*\mathcal{O}_nS)\times_{\widehat{\alpha}}\Z\cong M_{n^{\infty}}(\mathcal{O}_n)\times_{\overline{\alpha}}\Z$$
for an outer action $\overline{\alpha}$ of $\Z$ on $M_{n^{\infty}}(\mathcal{O}_n)$. Clearly, $M_{n^{\infty}}(\mathcal{O}_n)$ is a purely infinite simple $C^*$-algebra, and then so is $M_{n^{\infty}}(\mathcal{O}_n)\times_{\overline{\alpha}}\Z$ (whence $\mathcal{O}_n \rtimes_{\alpha} \N$) by \cite{JKO}).

Thus, we can compute $K$-Theory of $\mathcal{O}_n \rtimes_{\alpha} \N$ by using the Pimsner-Voiculescu exact sequence \cite{Black}:
$$\xymatrix{K_0(M_{n^{\infty}}(\mathcal{O}_n))\ar[r]^{{id-\overline{\alpha}}^*} &K_0(M_{n^{\infty}}(\mathcal{O}_n))\ar[r] & K_0(M_{n^{\infty}}(\mathcal{O}_n)\times_{\overline{\alpha}}\Z)\ar[d]\\
K_1(M_{n^{\infty}}(\mathcal{O}_n)\times_{\overline{\alpha}}\Z)\ar[u] & K_1(M_{n^{\infty}}(\mathcal{O}_n))\ar[l] & K_1(M_{n^{\infty}}(\mathcal{O}_n))\ar[l]_{{id-\overline{\alpha}}^*}
}
\,.$$
It is well-known that $K_1(M_{n^{\infty}}(\mathcal{O}_n))=0$, and it is easy to see that $K_0(M_{n^{\infty}}(\mathcal{O}_n))\cong \Z[\frac{1}{n}]$. Under this picture, ${id-\overline{\alpha}}^*$ is given by multiplication by $1-n$, so that it is injective. Hence, $K_1(\mathcal{O}_n \rtimes_{\alpha} \N)=0$, and thus $K_0(\mathcal{O}_n \rtimes_{\alpha} \N)\cong \Z[\frac{1}{n}]/(1-n)\Z[\frac{1}{n}]$. By Kirchberg-Phillips Theorem \cite{Kirch, Phil} we conclude that $\mathcal{O}_n \rtimes_{\alpha} \N\cong \mathcal{O}_n$.
}
\end{example}

\begin{remark}
{\rm Very recently Cuntz and Li have developed a theory for $C^*$-algebras associated to integral domains \cite{CL1}. As a consequence, a large amount of work on semigroup $C^*$-algebras has been done (see e.g \cite{CEL, CV, L}), specially when the action is given by \underline{unital} $\ast$-homomorphisms. From this point of view Corollary \ref{unitalinjseastar}, jointly with Lemma \ref{Lemma:DirectLimit1SeaStar}, could be a useful instrument to work on this line.}
\end{remark}

\section{Noncancellative left denominator monoids}

In this section we will briefly analyze what kind of result, analog to Theorem \ref{maintheorem}, can we expect when we weaken the hypotheses on $S$ from cancellative to left reversible. The reason for considering this situation relies on a construction, due to Exel, of crossed products of $C^*$-algebras by partial actions of groups (see e.g. \cite{Exel1}, and \cite{DE} for a purely algebraic analog). Concretely, Exel shows that we can associate, to a $C^*$-algebra $A$ and a partial action $\alpha$ of a (discrete) group $G$ on $A$, a $C^*$-partial crossed product algebra $A\times _{\alpha} G$; the more clear example of this construction is the Exel-Laca picture of Cuntz-Krieger algebras extended to infinite matrices \cite{ExelLaca}. Exel  \cite{Exel2} showed that there exist an inverse semigroup $\mathcal{S}(G)$ and a (global) action $\mathcal{S}(\alpha)$ of $\mathcal{S}(G)$ on $A$ such that the $C^*$-partial crossed product $A\times_{\alpha} G$ turns out to be isomorphic to the semigroup $C^*$-crossed product $A\times_{\mathcal{S}(\alpha)} \mathcal{S}(G)$ (see also \cite{BE}). Since all these inverse semigroups $\mathcal{S}(G)$ enjoys the original standing hypotheses (\ref{data1}) \cite[Example 1.5(2)]{pic}, the analysis of the dilation construction in this context could be a useful tool for studying Exel crossed products by partial actions of groups from the ``classical'' context of $C^*$-crossed products of groups.\vspace{.2truecm}

Let us fix then the concrete data for this section.

\begin{noname}\label{datafinal} 
{\rm
Let $A$ be a unital ring, let $S$ be a not necessarily cancellative left Ore, left reversible monoid, and $\alpha
:S\rightarrow \ndr(A)$ an action.
}
\end{noname}

Under these hypotheses, if $I:=\bigcup_{s'\in S}\ker(\alpha_{s'})$, then $\sop*_\alpha A*_\alpha S= \sop*_{\alpha'} (A/I)*_{\alpha'} S$ for an action $\alpha': S\rightarrow \ndr(A/I)$ by injective homomorphisms by Proposition \ref{kernel}. So, we can assume that the action is given by injective homomorphisms. 

Hence, the results in Section 2 apply, so that $\alpha$ extends to an action $\widehat{\alpha}:S\rightarrow \ndr(S_-AS_+)$ by corner isomorphisms and $\sopas=\SopaS$. Thus, we can assume that the action is given by corner isomorphisms. So, we can recast (\ref{datafinal}) as 

\begin{noname}\label{datafinal2} 
{\rm
Let $A$ be a unital ring, let $S$ be a not necessarily cancellative left Ore, left reversible monoid, and $\alpha
:S\rightarrow \ndr(A)$ an action by corner isomorphisms.
}
\end{noname}

As noticed in Remark \ref{nocancelnocorner}(2), under our hypotheses the construction of the ring $S^{-1}A$ still works correctly, the action of $\alpha$ on $A$ still extends to an action $\widehat{\alpha}: S\rightarrow \aut(\sinva)$ with the same definition, and the map $\phi$ is still a $S$-equivariant embedding.\vspace{.2truecm}

At this point, the only remaining question is the exact relation of the monoid $S$ with the possible associated groups which allows us to represent $\sopas$ as a sort of corner ring over an skew group ring $A*_{\beta}G$. There are  two concrete group constructions associated to $S$:
\begin{enumerate}
\item The monoid localization $G=S^{-1}S$. This is a group satisfying a universal property with respect to the natural map $\lambda: S\rightarrow G$, which is injective if and only if $S$ is cancellative \cite[Corollary 8.5]{Cohn}.
\item The monoid $\widetilde{S}:=\widehat{\alpha}(S)\leq \aut(\sinva)$. This is a cancellative left Ore monoid, so that by part (1) it embeds in a group $\widetilde{G}\leq \aut(\sinva)$ \cite[Proposition 2.6]{pic}.
\end{enumerate}

By the universal property of $G$, there exists a unique group morphism $\varphi: G\rightarrow \widetilde{G}$ such that $\widehat{\alpha}=\varphi \lambda$. The map $\varphi$ is one-to-one by the universal property of $G$, while it is onto by the universal property of $\widetilde{G}$. So, $\varphi$ is an isomorphism and moreover, $\widetilde{S}$ is isomorphic to $\lambda(S)$ through this isomorphism. In particular, for any $s,t\in S$, we have $\lambda (s)=\lambda (t)$ if and only if $\widehat{\alpha}_s=\widehat{\alpha}_t$. By Proposition \ref{genunit}, there exists an idempotent $e:=[1_{\widetilde{S}},1_A]\in {\widetilde{S}}^{-1}A$ such that
$${{\widetilde{S}}^{\text{\rm op}}}*_{\widehat{\alpha}} A*_{\widehat{\alpha}} {\widetilde{S}}=e({\widetilde{S}}^{-1}A*_{\widehat{\alpha}} {G})e$$
as $G$-graded rings.

Notice that $\phi:A\rightarrow {\widetilde{S}}^{-1}A$ remains an injective $\widetilde{S}$-equivariant homomorphisms, and its image coincide with $[1_{\widetilde{S}},1_A]({\widetilde{S}}^{-1}A)[1_{\widetilde{S}},1_A]$. Now, fixing the monoid homomorphisms $\sop \rightarrow {{\widetilde{S}}^{\text{\rm op}}}$ (given by the rule $t_-\mapsto \widetilde{t}_-$), $S\rightarrow \widetilde{S}$ (given by the rule $t_+\mapsto \widetilde{t}_+$), and the identity map $\text{id}: A\rightarrow A$, we can use the universal property of $\sopas$ to induce an onto ring homomorphism
$$\widehat{\lambda}:\sopas \twoheadrightarrow {{\widetilde{S}}^{\text{\rm op}}}*_{\widehat{\alpha}} A*_{\widehat{\alpha}} {\widetilde{S}}$$
Then, we conclude the following result, which generalizes Theorem \ref{maintheorem}.

\begin{theorem}\label{finaltheorem}
Let $A$ be a unital ring, let $S$ be a not necessarily cancellative left Ore, left reversible  monoid with enveloping group $G=S^{-1}S$, let $\lambda:S\rightarrow G$ be the natural map, let ${\widetilde{S}}=\lambda(S)$, and let $\alpha :S\rightarrow \ndr(A)$ be an action of $S$ on $A$. If $I:=\bigcup_{s'\in S}
\ker(\alpha_{s'})$, then $\alpha$ extends to an action $\widehat{\alpha}:G\rightarrow \aut({\widetilde{S}}^{-1}({\widetilde{S}}_-(A/I){\widetilde{S}}_+))$, and there exists a nonzero idempotent $e\in ({\widetilde{S}}^{-1}({\widetilde{S}}_-(A/I){\widetilde{S}}_+))$ such that $\alsom _s(e)\le e$ for all $s\in
S $ and
$$\Phi: \sopas \twoheadrightarrow e(({\widetilde{S}}^{-1}({\widetilde{S}}_-(A/I){\widetilde{S}}_+))*_{\widehat{\alpha}}G)e$$
is a $S$-equivariant onto ring homomorphism.
\end{theorem}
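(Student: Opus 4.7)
The strategy is to combine the reductions from Section 2 with Proposition \ref{genunit} by passing through the cancellative quotient monoid $\widetilde{S}=\lambda(S)$. The discussion preceding the theorem already outlines most of the machinery, so the proof is essentially an assembly argument, and I will organize it in four steps.

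\textbf{Step 1 (Reduction to injective action).} First I would apply Proposition \ref{kernel}(4) to replace $A$ by $A/I$, yielding an action $\alpha':S\to\ndr(A/I)$ by injective homomorphisms with $\sop*_\alpha A*_\alpha S=\sop*_{\alpha'}(A/I)*_{\alpha'}S$. Note that this step does not require cancellativity of $S$; the left reversibility of $S$ was already used in the hypotheses of Proposition \ref{kernel}.

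\textbf{Step 2 (Reduction to corner isomorphisms).} Next, I would invoke Lemma \ref{aquestcalia} together with Lemma \ref{Lemma5nou} to pass to the dynamical system $(S_-(A/I)S_+,\,S,\,\overline{\alpha})$, where $\overline{\alpha}$ acts by corner isomorphisms. The equalities
\[
\sop*_\alpha A*_\alpha S \;=\; \sop*_{\alpha'}(A/I)*_{\alpha'}S \;=\; \sop*_{\overline{\alpha}} S_-(A/I)S_+ *_{\overline{\alpha}} S
\]
follow directly. At this point I am in the situation of (\ref{datafinal2}): the base ring is $B:=S_-(A/I)S_+$, and the action $\overline{\alpha}:S\to\ndr(B)$ is by corner isomorphisms but $S$ is still only left reversible, not necessarily cancellative.

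\textbf{Step 3 (Passage to the cancellative monoid $\widetilde{S}$).} Following the discussion in Section 5, I would now form the image monoid $\widetilde{S}:=\widehat{\alpha}(S)\subseteq\aut(\widetilde{S}^{-1}B)$. As noted in Remark \ref{nocancelnocorner}(3), the construction of $\widetilde{S}^{-1}B$ works correctly even when $S$ is not cancellative, and $\overline{\alpha}$ extends to an action $\widehat{\alpha}:\widetilde{S}\to\aut(\widetilde{S}^{-1}B)$. Because $\widetilde{S}$ is cancellative left Ore, the universal property of $G=S^{-1}S$ and of the enveloping group $\widetilde{G}$ of $\widetilde{S}$ yield mutually inverse maps identifying $\widetilde{G}\cong G$; under this identification $\widetilde{S}$ corresponds to $\lambda(S)$, and $\widehat{\alpha}_s=\widehat{\alpha}_t$ in $\widetilde{S}$ if and only if $\lambda(s)=\lambda(t)$ in $G$. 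I would then apply Proposition \ref{genunit} to the dynamical system $(B,\widetilde{S},\widehat{\alpha})$, producing a full idempotent $e\in\widetilde{S}^{-1}B$ with $\widehat{\alpha}_{\widetilde{s}}(e)\leq e$ for all $\widetilde{s}\in\widetilde{S}$, and a $G$-graded isomorphism
\[
{\widetilde{S}}^{\text{op}}*_{\widehat{\alpha}} B*_{\widehat{\alpha}}\widetilde{S} \;\cong\; e\bigl((\widetilde{S}^{-1}B)*_{\widehat{\alpha}}G\bigr)e.
\]

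\textbf{Step 4 (Assembling the onto map).} Finally, I would define $\Phi$ by invoking the universal property of $\sopas$ (equivalently $\sop*_{\overline{\alpha}}B*_{\overline{\alpha}}S$ after Step 2): the monoid maps $\sop\to\widetilde{S}^{\text{op}}$ sending $t_-\mapsto\widetilde{t}_-$ and $S\to\widetilde{S}$ sending $t_+\mapsto\widetilde{t}_+$, together with the identity on $B$, satisfy the defining relations of Definition \ref{newring} because $\widetilde{\alpha}$ restricted to $B$ equals $\overline{\alpha}$ and because the identities $\widetilde{s}_-\widetilde{s}_+=1$, $\widetilde{s}_+\widetilde{s}_-=p_s$ hold in ${\widetilde{S}}^{\text{op}}*_{\widehat{\alpha}}B*_{\widehat{\alpha}}\widetilde{S}$. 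This induces a ring homomorphism $\Phi$ landing in the corner $e((\widetilde{S}^{-1}B)*_{\widehat{\alpha}}G)e$, and surjectivity is immediate from Proposition \ref{sum}(1), since every generator of the codomain lies in the image of $\Phi$. The $S$-equivariance is tautological from the construction, as $\Phi(t_+)=\widetilde{t}_+$ intertwines $\alpha$ and $\widehat{\alpha}\circ\lambda$.

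The main subtle point, and the one I expect to require the most care, is Step 3: one must verify that the identification $\widetilde{G}\cong G$ via $\varphi$ is compatible with the action, so that the group action $\widehat{\alpha}:G\to\aut(\widetilde{S}^{-1}B)$ is unambiguously defined, and that the resulting $\Phi$ factors through $G$ rather than only through $\widetilde{S}$. This is precisely where injectivity of $\Phi$ may fail: distinct $s,t\in S$ with $\lambda(s)=\lambda(t)$ yield distinct $s_+,t_+\in\sopas$ but the same image $\widetilde{s}_+=\widetilde{t}_+$ in the target, which is why only surjectivity, and not bijectivity, is claimed.
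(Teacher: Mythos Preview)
Your proposal is correct and follows essentially the same approach as the paper: the proof of Theorem~\ref{finaltheorem} in the paper is really the discussion occupying Section~5 up to the theorem statement, and you have reproduced its four moves faithfully --- reduce to an injective action via Proposition~\ref{kernel}(4), reduce to corner isomorphisms via Lemmas~\ref{Lemma5nou} and~\ref{aquestcalia}, pass to the cancellative image monoid $\widetilde{S}=\widehat{\alpha}(S)\cong\lambda(S)$ and apply Proposition~\ref{genunit}, and finally induce the surjection $\widehat{\lambda}$ (your $\Phi$) from the universal property of $\sopas$. Your closing remark about why only surjectivity is claimed is also exactly the point the paper makes immediately after Theorem~\ref{finaltheoremSeaStar}.
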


Certainly, the technology involved allows to transfer the results to the context of $C^*$-algebras with no additional effort, at least when the action is given by injective $\ast$-ho\-mo\-mor\-phisms. So, we have the following generalization of Theorem \ref{forseastar}.

\begin{theorem}\label{finaltheoremSeaStar}
Let $A$ be a unital $C^*$-algebra, let $S$ be a not necessarily cancellative left Ore, left reversible monoid with enveloping group $G=S^{-1}S$, let $\lambda:S\rightarrow G$ be the natural map, let ${\widetilde{S}}=\lambda(S)$, and let $\alpha :S\rightarrow \ndr(A)$ be an action of $S$ on $A$ by injective $\ast$-homomorphisms. Then, $\alpha$ extends to an action $\widehat{\alpha}:G\rightarrow \aut({\widetilde{S}}^{-1}({\widetilde{S}}^*A{\widetilde{S}}))$, and there exists a nonzero full projection $e\in {\widetilde{S}}^{-1}({\widetilde{S}}^*A{\widetilde{S}})$ such that $\alsom _s(e)\le e$ for all $s\in
S $ and
$$\Phi: A\rtimes_{\alpha}S \twoheadrightarrow e(({\widetilde{S}}^{-1}({\widetilde{S}}^*A{\widetilde{S}}))\times_{\widehat{\alpha}}G)e$$
is an $S$-equivariant onto $\ast$-homomorphism.
\end{theorem}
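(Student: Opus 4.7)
The plan is to mirror, inside the $C^{*}$-algebraic framework, the strategy used in the proof of Theorem \ref{finaltheorem}: first reduce via Proposition \ref{Prop:Paschke1} to an action by corner $\ast$-isomorphisms; then construct $\widetilde{S}^{-1}(\widetilde{S}^{*}A\widetilde{S})$ as a $C^{*}$-algebraic direct limit and observe that the image $\widetilde{S}$ of $S$ inside $\aut(\widetilde{S}^{-1}A)$ is cancellative; apply Theorem \ref{forseastar} to the resulting cancellative system $(\widetilde{S}^{*}A\widetilde{S},\widetilde{S},\widehat{\alpha})$; and finally invoke the universal property of $A\rtimes_{\alpha}S$ to manufacture the surjection $\Phi$.

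Concretely, since $\alpha$ is injective, Proposition \ref{Prop:Paschke1} yields a corner $\ast$-isomorphism extension $\overline{\alpha}:S\to\ndr(S^{*}AS)$ together with the identification $A\rtimes_{\alpha}S=(S^{*}AS)\rtimes_{\overline{\alpha}}S$; by replacing $(A,S,\alpha)$ with $(S^{*}AS,S,\overline{\alpha})$ I may assume that $\alpha$ already acts by corner $\ast$-isomorphisms. By Remark \ref{nocancelnocorner}(3), the ring $S^{-1}A$ remains well-defined when $S$ is merely left Ore and left reversible, and the direct-limit description of Remark \ref{Rem:Laca} and Lemma \ref{Lemma:DirectLimit1SeaStar} transfers to the $C^{*}$-category to furnish $\widetilde{S}^{-1}(\widetilde{S}^{*}A\widetilde{S})$ as a unital $C^{*}$-algebra on which $\alpha$ extends to a $\ast$-action $\widehat{\alpha}$. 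The submonoid $\widetilde{S}:=\widehat{\alpha}(S)\leq\aut(\widetilde{S}^{-1}A)$ is cancellative left Ore, and as in the discussion preceding Theorem \ref{finaltheorem} its enveloping group is canonically identified with $G=S^{-1}S$; consequently $\widehat{\alpha}$ extends to a genuine group action $\widehat{\alpha}:G\to\aut(\widetilde{S}^{-1}(\widetilde{S}^{*}A\widetilde{S}))$ by $\ast$-automorphisms.

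Applying Theorem \ref{forseastar} to the cancellative dynamical system $(\widetilde{S}^{*}A\widetilde{S},\widetilde{S},\widehat{\alpha})$ now produces a full projection $e\in\widetilde{S}^{-1}(\widetilde{S}^{*}A\widetilde{S})$ with $\widehat{\alpha}_s(e)\le e$ for every $s\in\widetilde{S}$, together with an $\widetilde{S}$-equivariant $\ast$-isomorphism
$$(\widetilde{S}^{*}A\widetilde{S})\rtimes_{\widehat{\alpha}}\widetilde{S}\;\cong\;e\bigl(\widetilde{S}^{-1}(\widetilde{S}^{*}A\widetilde{S})\times_{\widehat{\alpha}}G\bigr)e.$$
Precomposing with $\lambda:S\to\widetilde{S}$ immediately transfers $\widehat{\alpha}_s(e)\le e$ to every $s\in S$. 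To manufacture $\Phi$, the inclusion $A\hookrightarrow\widetilde{S}^{*}A\widetilde{S}$ composed with the corner embedding, together with the composition of $\lambda:S\to\widetilde{S}$ and the canonical isometric representation of $\widetilde{S}$ inside the corner, supplies a covariant representation of $(A,S,\alpha)$; by Definition \ref{newalgebra} this induces a $\ast$-homomorphism $\Phi:A\rtimes_{\alpha}S\to e\bigl(\widetilde{S}^{-1}(\widetilde{S}^{*}A\widetilde{S})\times_{\widehat{\alpha}}G\bigr)e$. Surjectivity follows because the image already contains both the copy of $\widetilde{S}^{*}A\widetilde{S}$ and the isometries coming from $\widetilde{S}$, which jointly generate the target.

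The main obstacle I foresee is twofold. First, one must verify that the $C^{*}$-algebraic version of $\widetilde{S}^{-1}(\widetilde{S}^{*}A\widetilde{S})$ really exists in the left reversible but noncancellative setting, and that the algebraic extension $\widehat{\alpha}$ of Lemma \ref{actionsinva} genuinely takes values in $\ast$-automorphisms; this amounts to revisiting Remark \ref{nocancelnocorner}(3) at the topological level, which should be routine once $\widetilde{S}^{-1}A$ is modelled as the $C^{*}$-direct limit of the corners $p_tAp_t$ along the inclusion-type maps from Lemma \ref{Lemma:DirectLimit1SeaStar}. Second, and more essentially, $\Phi$ need not be injective because $\lambda$ may collapse elements of $S$; the theorem wisely asserts only surjectivity, but one must take care throughout that no step in the construction secretly relies on injectivity of $\lambda$.
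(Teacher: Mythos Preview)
Your proposal is correct and follows essentially the same route as the paper. The paper does not give a separate proof for Theorem \ref{finaltheoremSeaStar}; it is deduced from Theorem \ref{finaltheorem} by the single remark that ``the technology involved allows to transfer the results to the context of $C^*$-algebras with no additional effort,'' and your plan---reduce to corner $\ast$-isomorphisms via Proposition \ref{Prop:Paschke1}, pass to the cancellative image $\widetilde{S}=\lambda(S)$, apply Theorem \ref{forseastar} there, and then use the universal property of $A\rtimes_{\alpha}S$ to obtain the surjection $\Phi$---is precisely that transfer made explicit.
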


Unfortunately, it seems quite clear that $\mbox{Ker}(\Phi)$ is not a $\alpha$-invariant ideal. So, up to very particular cases we cannot expect to represent $\sopas$ (respectively $A\rtimes_{\alpha}S$) \underline{exactly} as a full corner of a suitable skew group ring (respectively a group $C^*$-crossed product). 

\section*{Acknowledgments}

This work was inspired on conversations with Joachim Cuntz rising during the visit of the author to the Mathematisches Forschungsinstitut Oberwolfach (Germany) to participate in the mini-workshop ``Endomorphisms, Semigroups and C*-algebras of Rings'' in April 2012. The author thanks Joachim Cuntz for his kindness, and the host center for its warm hospitality. Also, the author thanks the referee, whose suggestions deeply improved this paper.


\begin{thebibliography}{99}

\bibitem{AA1} \textsc{G. Abrams, G. Aranda Pino}, The Leavitt path algebra of a graph,
\emph{J. Algebra} \textbf{293} (2005), 319--334.

\bibitem{AB} \textsc{P. Ara, M. Brustenga}, $K_1$ of corner skew
Laurent polynomial rings and applications, \emph{Comm. Algebra} \textbf{33} (2005), no. 7, 2231--2252.

\bibitem{ABC} \textsc{P. Ara, M. Brustenga, G. Corti\~nas}, $K$-theory of Leavitt path algebras, \emph{M\"unster J. Math.} \textbf{2} (2009), 5--33.

\bibitem{AGGBP} \textsc{P. Ara, K.R. Goodearl, M.A. Gonz\'alez-Barroso, E. Pardo}, Fractional skew monoid rings, \emph{J. Algebra} \textbf{278} (2004), 104--126.

\bibitem{AGP} \textsc{P.Ara, K. Goodearl, E. Pardo}, $K_0$ of purely infinite simple regular rings, \emph{K-Theory}, \textbf{26} (2002), 69-100.

\bibitem{AMFP} \textsc{P. Ara, M.A. Moreno, E. Pardo}, Nonstable K-Theory for graph algebras, \emph{Algebra Rep. Th.} \textbf{10} (2007), 157-178.

\bibitem{Black}\textsc{B. Blackadar},``$K$-Theory for operator
algebras", MSRI Publications 5, Second Edition, Cambridge Univ.
Press, 1998.

\bibitem{BE} \textsc{A. Buss, R. Exel}, Inverse semigroup expansions and their actions on $C^*$-algebras, {arXiv:1112.0771v1} (2011).

\bibitem{ClPr} \textsc{A.H. Clifford, G.B. Preston}, ``The Algebraic Theory of Semigroups'', Vol. 1, Math. Surveys 7, Amer. Math. Soc., Providence 1961.

\bibitem{Cohn} \textsc{P.M. Cohn}, ``Free Rings and Their
Relations'', Second Ed., London Math. Soc. Monographs 19, Academic
Press, London, 1985.

\bibitem{C1} \textsc{J. Cuntz}, Simple C*-algebras generated by
isometries, \emph{Commun. Math. Phys.} \textbf{57} (1977),
173--185.

\bibitem{CL1} \textsc{J. Cuntz, X. Li}, The regular $C^*$-algebra of an integral domain, ``Quanta of maths'', 149--170, \emph{Clay Math. Proc.} \textbf{11}, Amer. Math. Soc., Providence, RI, 2010.

\bibitem{CEL} \textsc{J. Cuntz, S. Echterhoff, X. Li}, $K$-Theory for semigroups $C^*$-algebras, arXiv:1201.4680v1 (2012).

\bibitem{CV} \textsc{J. Cuntz, A. Vershik}, $C^*$-algebras associated with endomorphisms and polymorphisms of compact abelian groups, arXiv:1202.5960v1 (2012).

\bibitem{DE}  \textsc{M. Dokuchaev, R. Exel}, Associativity of crossed products by partial actions, enveloping actions and partial representations, \emph{Trans. Amer. Math. Soc.} \textbf{357} (2005), no. 5, 1931--1952.

\bibitem{Exel1} \textsc{R. Exel}, Twisted partial actions: a classification of regular $C^*$-algebraic bundles, \emph{Proc. London Math. Soc. (3)} \textbf{74} (1997), no. 2, 417--443.

\bibitem{Exel2} \textsc{R. Exel}, Partial actions of groups and actions of inverse semigroups, \emph{Proc. Amer. Math. Soc.} \textbf{126} (1998), no. 12, 3481--3494.

\bibitem{ExelLaca} \textsc{R. Exel, M. Laca}, Cuntz-Krieger algebras for infinite matrices, \emph{J. reine angew. Math.} \textbf{512} (1999), 119--172.

\bibitem{JKO} {\sc J. Jeong, K. Kodaka and H. Osaka}, Purely Infinite Simple $C^*$-Crossed Products II,  Canad. Math. Bull.  \textbf{39} vol.2 (1996),  203--210.

\bibitem{Kirch} \textsc{E. Kirchberg}, The classification of purely infinite C*-algebras using Kasparov theory, preprint.

\bibitem{LP} \textsc{A. Louly, E. Pardo}, Purely infinite simple skew group rings, \emph{Preprint} (2006).

\bibitem{Laca} \textsc{M. Laca}, From endomorphisms to automorphisms and back: dilations and full corners, \emph{J. London Math. Soc.} (2) \textbf{61} (2000), no. 3, 893--904.

\bibitem{LacaRae} \textsc{M. Laca, I. Raeburn}, Semigroup-crossed
products and the Toeplitz algebras of nonabelian groups, \emph{J.
Func. Anal.} \textbf{139} (1996), 415--440.

\bibitem{Larsenirish} \textsc{N.S. Larsen}, Non-unital semigroup
crossed products, \emph{Math. Proc. Royal Irish Acad.} (2)
\textbf{100A} (2000), 205--218.

\bibitem{Larsencanad} \textsc{N.S. Larsen}, Crossed products by
semigroups of endomorphisms and groups of partial automorphisms,
\emph{Canad. Math. Bull.} \textbf{46} (2003), 98--112.

\bibitem{LarsenRae} \textsc{N.S. Larsen, I. Raeburn}, Faithful
representations of crossed products of $\N^K$, \emph{Math. Scand.}
\textbf{89} (2) (2001), 283--296.

\bibitem{Lvtt} \textsc{W.G. Leavitt}, Modules without invariant
basis number, \emph{Proc. Amer. Math. Soc.} \textbf{8} (1957),
322-328.

\bibitem{L} \textsc{X. Li}, Semigroup $C^*$-algebras and amenability of semigroups, arXiv:1105.5539v1 (2011).

\bibitem{Murphy1} \textsc{G.J. Murphy}, Ordered groups and crossed products of $C^*$-algebras, \emph{Pacific. J. Math.} \textbf{148} (1991), 319--349.

\bibitem{Murphy2} \textsc{G.J. Murphy}, Crossed products of $C^*$-algebras by endomorphisms, \emph{Integr. Equat. Oper. Th.} \textbf{24} (1996), 298--319.

\bibitem{Paschke} \textsc{W.L. Paschke}, The crossed product of a
C*-algebra by an endomorphism, \emph{Proc. Amer. Math. Soc.}
\textbf{80} (1980), 113--118.

\bibitem{Phil} \textsc{N.C. Phillips}, A classification theorem for nuclear purely infinite simple C*-algebras, \emph{Doc. Math.} \textbf{5} (2000), 49-114.

\bibitem{pic} \textsc{G. Picavet}, Localization with respect to
endomorphisms, \emph{Semigroup Forum} \textbf{67} (2003), 76--96.

\bibitem{Raeburn} \textsc{I. Raeburn}, "Graph Algebras", CBMS Reg. Conf. Ser. Math., vol. 103, Amer. Math. Soc., Providence, RI, 2005.

\bibitem{rordamclassif} \textsc{M. R\o rdam}, Classification of
certain infinite simple C*-algebras, \emph{J. Func. Anal.}
\textbf{131} (1995), 415--458.

\bibitem{Stacey} \textsc{P.J. Stacey}, Crossed products of $C^*$-algebras by $\ast$-endomorphisms, \emph{J. Austral. Math. Soc. Ser. A} \textbf{54} (1993), 204--212.
\end{thebibliography}
\end{document}